\numberwithin{equation}{section}
\newtheorem{theorem}{Theorem}[section]
\newtheorem{lemma}[theorem]{Lemma}
\newtheorem{corollary}[theorem]{Corollary}
\theoremstyle{definition}
\newtheorem{example}[theorem]{Example}
\newcommand{\A}{{\mathcal A}}
\newcommand{\es}{{\mathcal S}}
\newcommand{\IC}{{\mathbb C}}
\newcommand{\D}{{\mathbb D}}
\newcommand{\CC}{{\mathcal C}}
\newcounter{minutes}\setcounter{minutes}{\time}
\newcounter{hours}\setcounter{hours}{\time}
\begin{document}

\title[Maximal area integral problem]
{Maximal area integral problem for certain class of univalent analytic functions}

\author{Saminathan Ponnusamy 
}
\address{S. Ponnusamy,
Indian Statistical Institute (ISI), Chennai Centre, SETS (Society
for Electronic Transactions and security), MGR Knowledge City, CIT
Campus, Taramani, Chennai 600 113, India. }
\email{samy@isichennai.res.in, samy@iitm.ac.in}

 \author{Swadesh Kumar Sahoo 
 }
 \address{S. K. Sahoo, Discipline of Mathematics,
 Indian Institute of Technology Indore,
 Indore 452 017, India}
 \email{swadesh@iiti.ac.in}

 \author{Navneet Lal Sharma}
 \address{N. L. Sharma, Discipline of Mathematics,
 Indian Institute of Technology Indore,
 Indore 452 017, India}
 \email{sharma.navneet23@gmail.com}


\subjclass[2000]{Primary:  30C45, 30C70; Secondary: 30H10, 33C05}
\keywords{Analytic, univalent, convex,   starlike functions and
spirallike functions, Dirichlet-finite, area integral, and
Gaussian hypergeometric functions. \\
$
^\dagger$ {\tt  The first author is on leave from the Department of Mathematics,
Indian Institute of Technology Madras, Chennai-600 036, India}
}

\begin{abstract}
One of the classical problems concerns the class of analytic functions $f$ on the open unit disk $|z|<1$ which have
finite Dirichlet integral $\Delta(1,f)$, where
$$\Delta(r,f)=\iint_{|z|<r}|f'(z)|^2 \, dxdy \quad (0<r\leq 1).
$$
The class $\es ^*(A,B)$ of normalized functions $f$ analytic in $|z|<1$ and satisfies the
subordination condition $zf'(z)/f(z)\prec (1+Az)/(1+Bz)$ in $|z|<1$ and for some $-1\leq B\leq 0$,  $A\in  \IC$ with $A\neq B$,
has been studied extensively. In this paper, we solve the extremal problem of determining  the value of
$$\max_{f\in \es ^*(A,B)}\Delta(r,z/f)
$$
as a function of $r$. This settles the question raised by Ponnusamy and Wirths in \cite{PW13}.
One of the particular cases includes solution to a conjecture of Yamashita which was settled recently
by Obradovi\'{c}  et. al \cite{OPW14}.

\end{abstract}

\maketitle \pagestyle{myheadings} \markboth{S. Ponnusamy, S. K. Sahoo, and  N. L. Sharma}{Maximal area integral problem}


\begin{center}
\texttt{File:~\jobname .tex, printed: \number\day-\number\month-\number\year,
\thehours.\ifnum\theminutes<10{0}\fi\theminutes}
\end{center}

\thispagestyle{empty}


\section{Introduction and Preliminaries} \label{sec1}
Let $f$ and $g$ be two analytic functions in the unit disk $\D:=\{z\in \mathbb{C}: |z|<1 \}$.
We say that $f$ is {\em subordinate} to $g$, written as $f \prec g$,
if there exists an analytic function $w: \D \rightarrow \overline{\D}$ with
$w(0)=0$ such that $f(z)=g(w(z))$ for $ z\in \D.$
In particular, if $g$ is univalent in $\D$, then $f \prec g$ is equivalent to
$f(\D)\subset g(\D)$ and $f(0)=g(0)$; see \cite{Dur83,Good83}.

Denote by $\A$ the class of functions of the form
$f(z)=z+\sum_{n=2}^\infty a_n z^n
$
analytic in  $\D $. The class of univalent functions in $\A$ is denoted by $\es $.
Two subclasses of $\es $ to which we will make frequent reference are ${\es}^*$
and $\CC$, the subclasses of starlike functions (with respect to the origin) and convex
functions, respectively. Recall that a function
$f\in {\es}^*$ is characterized by the condition
$$\frac{zf'(z)}{f(z)} \prec \frac{1+z}{1-z},\quad z\in \D.
$$
There are a number of ways the class ${\es}^*$ has been generalized in the literature and
one such generalization is defined as follows: For $-1\leq B\leq 0$ and $A\in  \IC$, $A\neq B$,
define
$$\es ^*(A,B):=\left \{f\in \A: \frac{zf'(z)}{f(z)}\prec \frac{1+Az}{1+Bz},
\quad   z\in \D   \right\}.
$$
The function $k_{A,B}$ defined by
\begin{equation}\label{eq2-ext}
\displaystyle k_{A,B}(z):=
\left \{
\begin{array}{ll}
ze^{Az} & \mbox{ if } B=0,\\
z(1+Bz)^{(A/B)-1} & \mbox{ if } B\neq 0
     \end{array}\right.
\end{equation}
is in $\es ^*(A,B)$ and acts the role of extremal function for the class $\es ^*(A,B)$.
However, if $A=e^{i\alpha}(e^{i\alpha}-2\beta \cos \alpha)$ with $\beta<1$ and $B=-1$, then $\es ^*(A,B)$ reduces to the class
$\mathcal{S}_\alpha (\beta ) $ of functions $f$ (called $\alpha$-spirallike of order $\beta$) satisfying the condition
$$ {\rm Re}  \left ( e^{-i\alpha}\frac{zf'(z)}{f(z)}\right )>\beta \cos \alpha, \quad z\in \D,
$$
and recall that each function in $\mathcal{S}_\alpha (\beta )$ is univalent if $ \beta \in [0,1)$ and $\alpha \in (-\pi/2, \pi/2)$
(see \cite{Lib67}). Clearly, $\mathcal{S}_\alpha (\beta )\subset \mathcal{S}_\alpha (0)$ whenever $0\leq \beta <1$.
Functions in $\mathcal{S}_\alpha(0)$  are called
$\alpha$-spirallike. The class $\mathcal{S}_\alpha (0)$ was introduced by  ${\rm \check{S}}$pa${\rm\check{c}}$ek \cite{Spacek-33}
and the set $\mathcal{S}p=\cup \{\mathcal{S}_\alpha (0):\,\alpha \in (-\pi/2, \pi/2) \}$ is referred to us the
class of spirallike functions.  As remarked in
\cite{Lib67}, spirallike functions have been used to obtain important counter-examples in geometric function theory
(see also \cite[p.~72 and Theorem 8.11]{Dur83}).

More often, the class $\es ^*(A,B)$ is studied with the restriction $-1\leq B<A\leq 1$ (see Janowski \cite{Jan73})
so that the values of ${zf'(z)}/{f(z)}$ lie inside the disk in the right half plane with center $(1-ABr^2)/(1-B^2r^2)$ and
radius $(A-B)r/(1-B^2r^2)$,  and so,  the class $\es ^*(A,B)$ becomes a subclass of ${\es}^*$ whenever $-1\leq B<A\leq 1$.
We here list down in Table~1 the certain basic subclasses of the class ${\es}^*$ that are studied
for various choices of the pair $(A,B)$. Set for an abbreviation $p(z):=zf'(z)/f(z).$
\begin{center}
{\small\begin{tabular}{|c|l|l|l|l|l|}
\hline
 \textbf{Year} & \textbf{Authors} &  \textbf{ $\es ^*(A,B)$}
             & \textbf{Conditions} & \textbf{Subordination form} \\
\hline
1921 & Nevanlinna \cite{Nev21} &  $\es ^*(1,-1)={\es}^*$ & ${\rm Re}\, p(z)>0$
     &  $\displaystyle p(z)\prec\frac{1+z}{1-z}$\\
\hline
1936 & Robertson \cite{Rob36}  & $\es ^*(1-2\beta,-1)$ & ${\rm Re}\, p(z)>\beta$
     &  $\displaystyle p(z)\prec\frac{1+(1-2\beta) z}{1-z}$\\
     &  &  $={\es}^*(\beta),\,\beta \in [0,1)$      &     & \\
\hline
1968 &  Singh \cite{Sin68}    & $\es ^*(1,0)$ & $\displaystyle |p(z)-1|<1$
     & $\displaystyle p(z)\prec 1+z$\\
\hline
1968 & Padamanabhan \cite{Pad68}  & $\es ^*(\alpha,-\alpha) :={\mathcal T} (\alpha),$   & $\displaystyle
\left|\frac{p(z)-1}{p(z)+1}\right|<\alpha$
& $\displaystyle p(z)\prec\frac{1+\alpha z}{1-\alpha z}$\\
&             & $\alpha \in (0,1]$        &     & \\
\hline
1974 & Singh and    & $\es ^*(1,\frac{1}{\alpha}-1),\,\alpha \geq
\frac{1}{2}$ & $\displaystyle |p(z)-\alpha|<\alpha$
& $\displaystyle p(z)\prec \frac{1+z}{1+\frac{1-\alpha}{\alpha}z}$\\
  &   Singh \cite{SiSi74}         &        &     & \\
\hline
1978 & Silverman \cite{Sil78}   & $ \es ^*\left(\frac{b^2-a^2+a}{b},\frac{1-a}{b}\right)$,
      & $\displaystyle |p(z)-a|<b$  & $\displaystyle p(z)\prec
      \frac{1+\frac{b^2-a^2+a}{b}z}{1+\frac{1-a}{b}z}$\\
      &           & $a+b\geq 1~\&$          &     & \\
       &           & $a\in [b,1+b]$         &     & \\
\hline
2014 &  Sahoo  and & $\es ^*((1-2\beta)\alpha,-\alpha)$
& $\displaystyle  \left|\frac{p(z)-1}{p(z)+1-2\beta}\right|$
& $\displaystyle p(z)\prec\frac{1+(1-2\beta)\alpha z}{1-\alpha z}$\\
 & Sharma \cite{SS14}   & $:={\mathcal T} (\alpha,\beta),$ & $< \alpha$ & \\
 &       & $\alpha \in (0,1],\,\beta \in [0,1)$      &  & \\
\hline
\end{tabular}}

\vspace*{0.2cm} Table~1
\end{center}
From (\ref{eq2-ext}), we note that
\begin{equation}\label{eq3}
k_{1-2\beta,-1}(z)= \frac{z}{(1-z)^{2(1-\beta)}}:= k_{\beta}(z),\, \mbox{ and }\, k_{\alpha,-\alpha}(z)
=\frac{z}{(1-\alpha z)^{2}}.
\end{equation}

Suppose that $f$ is a function analytic in $\D$. We denote
by $\Delta(r,f)$, the area of the multi-sheeted image of
the disk $\mathbb{D}_r:=\{z\in\mathbb{C}:\,|z|<r\}$ ($0<r\le 1$) under $f$. Thus,
in terms of the coefficients of $f$, $f'(z)=\sum_{n=1}^\infty n a_nz^{n-1}$, one gets
with the help of the classical Parseval-Gutzmer formula (see \cite{SS14}) the relation
\begin{equation}\label{ex-eq1}
\Delta(r,f)=\iint_{\mathbb{D}_r}|f'(z)|^2 \, dxdy = \pi \sum_{n=1}^\infty n |a_n|^2r^{2n}
\end{equation}
which is called the Dirichlet integral of $f$.
Computing this area is known as the {\em area problem for the functions of type $f$.}
Thus a function has a finite Dirichlet integral exactly when its image has finite area
(counting multiplicities).  All polynomials and,
more generally, all functions $f\in\A$ for which $f'$ is bounded on $\D$ are Dirichlet finite.

Our work in this paper is motivated by the work of Yamashita~\cite{Yam90} and
recent works ~\cite{OPW13,OPW14,PW13,SS14}. In 1990, Yamashita~\cite{Yam90} conjectured that
\begin{equation}\label{eq3-ext}
\displaystyle \max_{f\in \CC}\Delta \left(r,\frac{z}{f}\right)=\pi r^2
\end{equation}
for each $r,\, 0<r\leq 1$. The maximum is attained only by the rotations of
the function $l(z)=z/(1-z).$
In 2013, the Yamashita conjecture was settled in \cite{OPW13} (see Corollary~\ref{cor1}) in a
more general setting including functions for functions from $\mathcal{S}_\alpha (\beta )$ (see \cite{PW13}).
In the recent paper \cite{SS14}, the maximum area problem for
the functions of type $z/f(z)$ when $f\in {\mathcal S}^* ((1-2\beta)\alpha,-\alpha)\equiv
{\mathcal T} (\alpha,\beta)$ $(\,0< \alpha \leq 1,~ 0\leq \beta <1\,)$, is established
(see Corollary~\ref{cor6}).

A general problem on the Yamashita conjecture for the class $\es ^*(A,B)$ was suggested in \cite{PW13}
(see also \cite{OPW13,OPW14}), and partially it is solved in \cite{SS14}. In this paper, we solve the problem in complete generality
for the full class $\es ^*(A,B)$, and the main results are stated in Section \ref{sec-main}.

\section{Main Theorems}\label{sec-main}
For complex numbers $a,b$ and $c$ with $c$ neither zero nor a negative integer,
the Gaussian hypergeometric series ${}_2F_1(a,b;c;z)$ is defined by
$${}_2F_1(a,b;c;z)=\sum_{n=0}^{\infty}\frac{(a)_n(b)_n}{(c)_n}\frac{z^n}{n!}, \quad |z|<1.
$$
Clearly, the function ${}_2F_1(a,b;c;z)$ is analytic in $\D$ and  thus, the shifted function $z{}_2F_1(a,b;c;z)$ belongs to $\A$.
Here $(a)_n$ denotes the shifted factorial notation defined, in terms of the
Gamma function, by
$$(a)_n=\frac{\Gamma (a+n)}{\Gamma (a)}=
\left \{
\begin{array}{ll}
\displaystyle{a(a + 1) \cdots (a + n-1)} & \mbox{ if } n\geq 1; \\[2mm]
1 & \mbox{ if } n=0,\, a\neq 0.
\end{array}
\right.
$$
The asymptotic behaviour of $F(a,b;c;z)$ near $z=1$ reveals that
$$ F(a,b;c;1) = \frac{\Gamma (c)\Gamma
(c-a-b)}{\Gamma (c-a)\Gamma (c-b)}< \infty \quad \mbox{ for ${\rm Re}\, c>{\rm Re}\,(a+b)$}.
$$
If either (or both) of $a$ and $b$ is (are) zero or a negative integer(s), then the power series reduces to a polynomial;
see~\cite{Rai60}.  Similarly, ${}_0F_1(a;z)$ is defined as
$${}_0F_1(a;z)= \sum_{n=0}^{\infty}\frac{1}{(a)_n}\frac{z^n}{n!}, \quad |z|<1.
$$

We now state our main results and their proofs will be given in Section \ref{sec3}.

\begin{theorem}\label{thm1}
Let $f(z)\in \es ^*(A,0)$ for  $0< |A|\leq 1$. 
Then  for $r, \, 0<r\leq 1$, we have
$$ \max_{f\in \es ^*(A,0)} \Delta \left(r,\frac{z}{f}\right)  =E_{A,0}(r)
$$
where $E_{A,0}(r)=\pi |A|^2{r}^2 {}_0F_1(2;|A|^2{r}^2)$.
The maximum is attained by the rotations of the function $ k_{A,0}(z)=ze^{Az}$.
\end{theorem}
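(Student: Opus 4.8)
The plan is to express every $g=z/f$ with $f\in\es^*(A,0)$ as a subordinate of the single extremal quotient $z/k_{A,0}(z)=e^{-Az}$, and then to estimate $\Delta(r,g)$ in two independent steps: a pointwise bound of $|g'|$ by $|A|\,|g|$, and a Littlewood-type bound of the disc $L^{2}$-norm of $g$ by that of $e^{-Az}$. This split is forced on us because $\Delta(r,\cdot)$ is \emph{not} monotone under subordination (for instance $z^{2}\prec z$ but $\Delta(1,z^{2})=2\pi>\pi=\Delta(1,z)$), so one cannot simply quote $g\prec e^{-Az}$; the derivative must be controlled separately.

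First I would set up the change of function. If $f\in\es^*(A,0)$, write $zf'(z)/f(z)=1+Aw(z)$ with $w$ a Schwarz function ($w(0)=0$, $|w(z)|\le|z|$). For $g:=z/f$ the logarithmic derivative gives $zg'(z)/g(z)=1-zf'(z)/f(z)=-Aw(z)$, hence $g'(z)/g(z)=-Aw(z)/z$; since $w(z)/z$ is analytic on $\D$ with $|w(z)/z|\le1$ there, this yields the pointwise estimate $|g'(z)|\le|A|\,|g(z)|$ on $\D$. Integrating, $g(z)=\exp\!\big(-A\int_{0}^{z}(w(t)/t)\,dt\big)$; parametrising the segment $[0,z]$ and using $|w(t)|\le|t|$ shows $\big|\int_{0}^{z}(w(t)/t)\,dt\big|\le|z|$, so $P(z):=\int_{0}^{z}(w(t)/t)\,dt$ is again a Schwarz function. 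Therefore $g=G\circ P$ with $G(z)=e^{-Az}$, i.e. $z/f\prec e^{-Az}=z/k_{A,0}$.

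Next I would invoke Littlewood's subordination theorem: $\int_{0}^{2\pi}|g(\rho e^{i\theta})|^{2}\,d\theta\le\int_{0}^{2\pi}|e^{-A\rho e^{i\theta}}|^{2}\,d\theta$ for every $\rho\in(0,1]$, and integrate against $\rho\,d\rho$ on $[0,r]$ to get $\iint_{\D_{r}}|g|^{2}\,dx\,dy\le\iint_{\D_{r}}|e^{-Az}|^{2}\,dx\,dy$. Combining this with $|g'|\le|A|\,|g|$ and $(e^{-Az})'=-Ae^{-Az}$,
\begin{align*}
\Delta\!\left(r,\tfrac{z}{f}\right)=\iint_{\D_{r}}|g'(z)|^{2}\,dx\,dy
&\le|A|^{2}\iint_{\D_{r}}|g(z)|^{2}\,dx\,dy\\
&\le|A|^{2}\iint_{\D_{r}}|e^{-Az}|^{2}\,dx\,dy=\Delta(r,e^{-Az}).
\end{align*}
Finally, expanding $e^{-Az}=\sum_{n\ge0}(-A)^{n}z^{n}/n!$ and applying $(\ref{ex-eq1})$ gives $\Delta(r,e^{-Az})=\pi\sum_{n\ge1}\frac{n}{(n!)^{2}}|A|^{2n}r^{2n}$, and since $(2)_{m}=(m+1)!$ one has $x\,{}_0F_1(2;x)=\sum_{n\ge1}x^{n}/\big(n!(n-1)!\big)$, so this equals $\pi|A|^{2}r^{2}\,{}_0F_1(2;|A|^{2}r^{2})=E_{A,0}(r)$, as required.

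For sharpness: equality for a given $r$ forces $|g'(z)|=|A|\,|g(z)|$ on $\D_{r}$, hence $|w(z)/z|\equiv1$, so by the maximum principle $w(z)=e^{i\gamma}z$ for some real $\gamma$; then $g(z)=e^{-Ae^{i\gamma}z}$ and $f(z)=ze^{Ae^{i\gamma}z}$, which is the rotation $\overline{\lambda}\,k_{A,0}(\lambda z)$ with $\lambda=e^{i\gamma}$, and a rotation of variables confirms these do attain $E_{A,0}(r)$. The step I expect to be most delicate is the subordination $z/f\prec e^{-Az}$: it rests on the elementary estimate $\big|\int_{0}^{z}w(t)/t\,dt\big|\le|z|$, which is exactly what promotes the infinitesimal bound $|g'|\le|A|\,|g|$ to a global subordination; once that is in hand, the rest is Littlewood's theorem together with the hypergeometric bookkeeping.
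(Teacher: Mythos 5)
Your proof is correct, but it takes a genuinely different and considerably shorter route than the paper's. The paper works entirely at the level of Taylor coefficients: Lemma~\ref{lem1} (Clunie's method) produces a quadratic coefficient inequality, and Lemma~\ref{lem2} converts that family of inequalities into the partial-sum domination $\sum_{k=1}^{N}k|b_k|^2r^{2k}\le\sum_{k=1}^{N}k|c_k|^2r^{2k}$ by solving for multipliers $\lambda_{n,N}$ via Cramer's rule and proving their positivity in the limit --- that is the technical heart of the paper. You bypass all of this: from $zg'(z)/g(z)=-Aw(z)$ you get the pointwise bound $|g'|\le|A|\,|g|$, and the subordination $z/f\prec e^{-Az}$ (which the paper also records, but uses only to name the coefficients $c_n$) feeds into Littlewood's subordination theorem to control $\iint_{\mathbb{D}_r}|g|^2$. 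The chain closes precisely because the extremal function satisfies $|(e^{-Az})'|=|A|\,|e^{-Az}|$ \emph{with equality}, so your two estimates concatenate to exactly $\Delta(r,e^{-Az})=E_{A,0}(r)$; your equality analysis (zero-freeness of $g$ forces $|w(z)/z|\equiv 1$, hence $w(z)=e^{i\gamma}z$ by the maximum principle) correctly identifies the rotations of $k_{A,0}$. What your argument buys is brevity and a transparent sharpness discussion; what it gives up is generality: for $B\neq 0$ one has $zg'/g=(B-A)zw/(1+Bzw)$, and no pointwise bound of the form $|g'|\le c\,|g|$ matches the logarithmic derivative of the extremal $(1+Bz)^{1-A/B}$, so the trick does not extend to Theorem~\ref{thm2} --- which is why the authors run the multiplier machinery uniformly for both theorems. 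Your approach also does not recover the truncated inequality \eqref{eq5} of Lemma~\ref{lem2} for each fixed $N$, only its $N\to\infty$ consequence, but that is all Theorem~\ref{thm1} requires.
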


If ${A=1}$ in Theorem~\ref{thm1}, then we get
\begin{corollary}\label{cor5}
Let $f\in \es ^*(1,0)$. Then we have
$$\max_{f\in \es ^*(1,0)}~\Delta \left(r,\frac{z}{f}\right)
=\pi r^2 {}_0F_1(2;r^2) \quad \mbox{for $0<r\leq 1$},
$$
where the maximum is attained only by the rotation of the function $k_{1,0}(z)=ze^{z}$.
\end{corollary}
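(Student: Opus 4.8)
The plan is to derive Theorem~\ref{thm1} from a sharp coefficient bound for the functions $z/f(z)$ with $f\in\es^*(A,B)$ (the case $B=0$), and then feed those bounds into the Parseval--Gutzmer identity~\eqref{ex-eq1}. First I would write $g(z)=z/f(z)=1+\sum_{n=1}^\infty b_nz^n$, so that $\Delta(r,z/f)$ cannot be applied directly to $g$ (which is not in $\A$); instead one uses that $(z/f)'$ has the expansion $(z/f)'=\sum_{n=1}^\infty nb_nz^{n-1}$ and hence
\begin{equation*}
\Delta\left(r,\frac{z}{f}\right)=\pi\sum_{n=1}^\infty n|b_n|^2r^{2n}.
\end{equation*}
So the whole problem reduces to maximizing $\sum_{n\ge1}n|b_n|^2r^{2n}$ over the admissible coefficient sequences $(b_n)$.

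The second step is to extract the structure of $(b_n)$ from the subordination $zf'(z)/f(z)\prec 1+Az$ (taking $B=0$). Writing $zf'(z)/f(z)=1+Aw(z)$ for a Schwarz function $w$, one gets a logarithmic-derivative relation $-z g'(z)/g(z)=zf'(z)/f(z)-1=Aw(z)$, i.e. $zg'(z)=-Ag(z)w(z)$. Comparing coefficients gives a recursion expressing $b_n$ in terms of $A$, the Schwarz coefficients, and $b_1,\dots,b_{n-1}$; for the extremal function $k_{A,0}(z)=ze^{Az}$ one has $w(z)=z$ and $z/k_{A,0}(z)=e^{-Az}$, so $b_n=(-A)^n/n!$ and $n|b_n|^2=|A|^{2n}/(n!(n-1)!)$, which sums to exactly $|A|^2r^2\,{}_0F_1(2;|A|^2r^2)$ after multiplying by $r^{2n}$ — this identifies the candidate extremal value $E_{A,0}(r)$. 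The key inequality to establish is $|b_n|\le |A|^n/n!$ for all $n$, with equality throughout only for rotations of $k_{A,0}$; this is the standard kind of sharp bound for $z/f$ in such classes and can be proved either by an induction on the recursion using $|w^{(j)}(0)/j!|$-type estimates (Schwarz–Pick / the inequality $\sum|c_k|^2\le 1$ for Schwarz coefficients), or by quoting the analogous result already present in the literature cited (\cite{OPW13,OPW14,SS14}) specialized to $B=0$.

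The third step is to assemble: once $|b_n|\le|A|^n/n!$ is known, $\sum_{n\ge1}n|b_n|^2r^{2n}\le\sum_{n\ge1}n\frac{|A|^{2n}}{(n!)^2}r^{2n}=\sum_{n\ge1}\frac{|A|^{2n}r^{2n}}{n!(n-1)!}=|A|^2r^2\,{}_0F_1(2;|A|^2r^2)$, giving $\Delta(r,z/f)\le E_{A,0}(r)$; equality forces $|b_n|=|A|^n/n!$ for every $n\ge1$, and tracing back through the recursion forces $w(z)=e^{i\theta}z$, i.e. $f$ is a rotation of $k_{A,0}$. I expect the main obstacle to be the sharp coefficient estimate $|b_n|\le|A|^n/n!$ together with the rigidity in the equality case: establishing the termwise bound is routine by induction, but showing that \emph{simultaneous} equality for all $n$ can only occur for the single one-parameter family of rotations (rather than some other competitor making the weighted sum equal) requires care — one must check that the bound is not merely termwise sharp but sharp in the aggregate, which is precisely why the explicit computation with $k_{A,0}$ matters. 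Corollary~\ref{cor5} is then immediate by setting $A=1$, where $|A|=1$ and the extremal function becomes $k_{1,0}(z)=ze^{z}$.
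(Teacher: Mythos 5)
Your overall reduction to maximizing $\sum_{n\ge 1} n|b_n|^2 r^{2n}$ via the Parseval--Gutzmer identity, and the identification of the candidate extremal value through $z/k_{A,0}(z)=e^{-Az}$, both match the paper. But the pivotal step of your argument --- the termwise bound $|b_n|\le |A|^n/n!$ --- is false, and this breaks the proof. Writing $zg'(z)/g(z)=-Aw(z)$ with $w$ a Schwarz function gives $g(z)=\exp\bigl(-A\sum_{k\ge1}(w_k/k)z^k\bigr)$, where $w(z)=\sum_{k\ge1}w_kz^k$. Taking $w(z)=z^3$ (so $f(z)=ze^{Az^3/3}\in\es^*(A,0)$, and in particular $f(z)=ze^{z^3/3}\in\es^*(1,0)$ for the corollary at hand) yields $z/f(z)=e^{-Az^3/3}$, hence $b_3=-A/3$ and $|b_3|=|A|/3>|A|^3/3!$ for every $0<|A|\le 1$; for $|A|<1$ the bound already fails at $n=2$ with $w(z)=z^2$. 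So no inductive Schwarz--Pick argument can deliver $|b_n|\le|A|^n/n!$: the extremal function $k_{A,0}$ does not maximize the individual coefficients of $z/f$, and your equality/rigidity discussion, which is built on tracing back through that termwise bound, collapses with it.

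What is true --- and what the paper actually proves (Lemma~\ref{lem2}) --- is the aggregate inequality $\sum_{k=1}^{N}k|b_k|^2r^{2k}\le\sum_{k=1}^{N}k|c_k|^2r^{2k}$ for every $N$, which suffices for the area comparison even though individual terms on the left may exceed the corresponding terms on the right. Its proof is genuinely different from yours: one first derives a Clunie-type quadratic constraint $\sum_{k=1}^{n-1}(k^2-A^2r^2)|b_k|^2r^{2k}+n^2|b_n|^2r^{2n}\le A^2r^2$ from the subordination (Lemma~\ref{lem1}), and then forms a positive linear combination of these constraints for $n=1,\dots,N$ with multipliers $\lambda_{n,N}$ determined by Cramer's rule, the whole difficulty being the positivity of the $\lambda_{n,N}$. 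You would need to replace your second step by something of this kind; your third step (summation and the identification with ${}_0F_1$) and the specialization to $A=1$ giving the stated corollary are fine.
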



\begin{theorem}\label{thm2}
Let $f\in \es ^*(A,B)$ for  $-1\leq B<0$ and $A \neq B  $ and $z/f(z)$ be a non-vanishing analytic function in $\D$.
Then, for $0<r\leq 1$, we have
\begin{align*}
\displaystyle \max_{f\in \es ^*(A,B)} \Delta \left(r,\frac{z}{f}\right)
& = E_{A,B}(r) =\pi |\overline{A}-B|^2{r}^2 {}_2F_1(A/B,\overline{A}/B;2;{B}^2{r}^2).
\end{align*}
 The maximum is attained for the rotations
of the function $ k_{A,B}(z)$ defined by $(\ref{eq2-ext}).$
\end{theorem}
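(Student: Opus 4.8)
The plan is to convert membership in $\es^*(A,B)$ into a pointwise estimate for $g:=z/f$, integrate it over circles $|z|=\rho$ to obtain a differential inequality for $\Delta(\rho,g)$, and then compare with the case of equality, which is realized by $k_{A,B}$ and evaluates to $E_{A,B}(r)$. Concretely: by hypothesis $g=z/f$ is analytic and non-vanishing in $\D$ with $g(0)=1$; writing $zf'/f=(1+A\omega)/(1+B\omega)$ for a Schwarz function $\omega$, one gets $zg'/g=1-zf'/f=(B-A)\omega/(1+B\omega)$, whence $\omega=zg'/\bigl((B-A)g-Bzg'\bigr)$, the denominator being $(B-A)g/(1+B\omega)\neq0$. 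The Schwarz lemma bound $|\omega(z)|\le|z|$ yields the key estimate $|g'(z)|\le|(B-A)g(z)-Bzg'(z)|$ on $\D$, an equality throughout $\D$ precisely when $\omega$ is a rotation, i.e.\ $f$ is a rotation of $k_{A,B}$. Squaring, expanding (here $B\in\R$ is used), and rearranging gives
$$(1-B^2|z|^2)\,|g'(z)|^2\le|B-A|^2|g(z)|^2-2B\operatorname{Re}\bigl[(B-A)\bar z\,g(z)\overline{g'(z)}\bigr].$$

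Next I would integrate this over $|z|=\rho$. With $g(z)=\sum_{n\ge0}c_nz^n$, $c_0=1$, set $M(\rho):=\sum_{n\ge1}n|c_n|^2\rho^{2n}=\Delta(\rho,g)/\pi$ and $N(\rho):=\sum_{n\ge0}|c_n|^2\rho^{2n}$. The circular mean of $\bar z\,g\overline{g'}$ equals $M(\rho)$, a nonnegative real, so the cross term contributes $(B-\operatorname{Re}A)M(\rho)$; Parseval (together with $\frac1{2\pi}\int_0^{2\pi}|g'(\rho e^{i\theta})|^2\,d\theta=M'(\rho)/2\rho$) then gives, for $0<\rho<1$,
$$(1-B^2\rho^2)\,\frac{M'(\rho)}{2\rho}\le|B-A|^2N(\rho)-2B(B-\operatorname{Re}A)M(\rho),\qquad N'(\rho)=\frac{2M(\rho)}{\rho},$$
with $M(0)=0$, $N(0)=1$. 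For $g_0=z/k_{A,B}=(1+Bz)^{(B-A)/B}$ the associated $\omega$ is the identity, so all of these become equalities; denote the resulting functions $M_0,N_0$.

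The heart of the argument is the comparison $M(\rho)\le M_0(\rho)$ on $(0,1]$. Since $\omega'(0)=c_1/(B-A)$, the Schwarz lemma gives $|c_1|\le|B-A|$, with equality forcing $\omega(z)=\varepsilon z$ and $f$ a rotation of $k_{A,B}$; so assume $|c_1|<|B-A|$. Then $\widetilde M:=M_0-M$ and $\widetilde N:=N_0-N$ vanish at $0$, are positive for small $\rho>0$ (both equal $(|B-A|^2-|c_1|^2)\rho^2+O(\rho^4)$), and satisfy $\widetilde N'=2\widetilde M/\rho$ together with $\widetilde M'(\rho)\ge\frac{2\rho}{1-B^2\rho^2}\bigl(|B-A|^2\widetilde N(\rho)-2B(B-\operatorname{Re}A)\widetilde M(\rho)\bigr)$. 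At a hypothetical first zero $\rho^*>0$ of $\widetilde M$ or $\widetilde N$: the relation $\widetilde N(\rho^*)=\int_0^{\rho^*}2\widetilde M/s\,ds>0$ excludes $\widetilde N(\rho^*)=0$, and $\widetilde M(\rho^*)=0$ forces $\widetilde M'(\rho^*)\ge 2\rho^*|B-A|^2\widetilde N(\rho^*)/(1-B^2\rho^{*2})>0$, so $\widetilde M,\widetilde N>0$ just past $\rho^*$ — a contradiction. Hence $\Delta(r,z/f)=\pi M(r)\le\pi M_0(r)=\Delta(r,z/k_{A,B})$. It remains to evaluate the latter: expanding $(1+Bz)^{(B-A)/B}=\sum_n\binom{(B-A)/B}{n}B^nz^n$, rewriting $\binom{(B-A)/B}{n}$ through Pochhammer symbols, and using $(2)_{n-1}=n!$ and $\overline{(A/B)_{n-1}}=(\overline{A}/B)_{n-1}$ (both valid since $B\in\R$), one finds $\Delta(r,z/k_{A,B})=\pi|\overline{A}-B|^2r^2\,{}_2F_1(A/B,\overline{A}/B;2;B^2r^2)=E_{A,B}(r)$. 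Tracing equality back through the estimates identifies the extremal functions as the rotations of $k_{A,B}$.

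I expect the comparison step to be the main obstacle: one must handle the borderline initial behaviour ($|c_1|=|B-A|$, disposed of by equality in the Schwarz lemma) and rigorously exclude $\widetilde M$ from re-entering the negative region. The uncontrolled sign of $-2B(B-\operatorname{Re}A)$ for complex $A$ is harmless, since that coefficient multiplies $\widetilde M$, which vanishes at any first zero. A lesser technical point is the interchange of $\operatorname{Re}$ with the circular integral when handling the cross term.
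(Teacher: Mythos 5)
Your proposal is correct, but it takes a genuinely different route from the paper's. The paper argues entirely at the level of Taylor coefficients: Clunie's method gives the truncated inequality
$$\sum_{k=1}^{n} k^2 |b_k|^2{r}^{2k-2}-\sum_{k=1}^{n-1} |B-A-kB|^2  |b_k|^2{r}^{2k}\leq |A-B|^2 ,$$
and Lemma~\ref{lem3} then combines these inequalities for $n=1,\dots,N$ with multipliers $\lambda_{n,N}$ obtained from a triangular linear system via Cramer's rule; the crux there is the positivity of the limits $\lambda_k$, which needs a separate case analysis according to the sign of $U_k=1-|1-\phi/k|^2B^2r^2$. You instead turn the subordination into the pointwise Schwarz-lemma bound $|g'|\le|(B-A)g-Bzg'|$, integrate over circles to get a first-order differential inequality coupling $M(\rho)=\Delta(\rho,g)/\pi$ and $N(\rho)$, and run a first-zero comparison against the extremal pair $(M_0,N_0)$. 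I checked the individual steps (the identity $\omega=zg'/\bigl((B-A)g-Bzg'\bigr)$ with non-vanishing denominator, the circular means, the derivation of the differential inequality, the treatment of the borderline case $|c_1|=|B-A|$ via equality in the Schwarz lemma, and the hypergeometric evaluation of $\Delta(r,z/k_{A,B})$) and they all hold; in particular your observation that the sign-uncontrolled coefficient $-2B(B-{\rm Re}\,A)$ multiplies $\widetilde M$, which vanishes at a putative first zero, is exactly what makes the comparison close, and it lets you bypass the paper's case distinction on $U_k$ as well as the determinant/induction machinery. What your argument does not deliver is the paper's stronger truncated statement (the inequality $\sum_{k=1}^Nk|b_k|^2r^{2k}\le\sum_{k=1}^Nk|c_k|^2r^{2k}$ for every $N$), which the multiplier method produces and which is of some independent interest; you only obtain the full-series comparison, but that is all Theorem~\ref{thm2} requires. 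The remaining ingredients --- subordination of $z/f$ to $(1+Bz)^{1-A/B}$ and the Pochhammer computation identifying $\Delta(r,z/k_{A,B})$ with $E_{A,B}(r)$ --- coincide with the paper's.
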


Note that Theorems~\ref{thm1} and \ref{thm2} generalize the results proved in \cite{OPW13,SS14,Yam90}.
To see the bounds for the Dirichlet finite function, we write
$$E_{A,0}(1)= \pi |A|^2\sum_{n=0}^{\infty}\frac{1}{(1)_{n}(2)_n}{|A|}^{2n}
~\mbox{ and }~
E_{A,B}(1)= \pi |\overline{A}-B|^2\sum_{n=0}^{\infty}\frac{(A/B)_{n} (\overline{A}/B)_{n} }{(1)_{n}(2)_n}{B}^{2n}.
$$
For certain values of $A$ and $B$, the images of the unit disk under the extremal functions
$g_{A,0}(z)=z/k_{A,0}(z)=e^{-Az}$ and $g_{A,B}(z)=z/k_{A,B}(z)=(1+Bz)^{1-A/B}$,
and numerical values of $E_{A,0}(1)$ and $E_{A,B}(1)$
are described in Figures~1--4 and Table~2, respectively. We remind the reader that for $B=-1$,
$E_{A,B}(1)$ is finite only if $2>{\rm Re}((A+\overline{A})/B)$, i.e. if ${\rm Re}\,A>-1$.

\begin{figure}
\begin{minipage}[b]{0.5\textwidth}
\includegraphics[width=6.5cm]{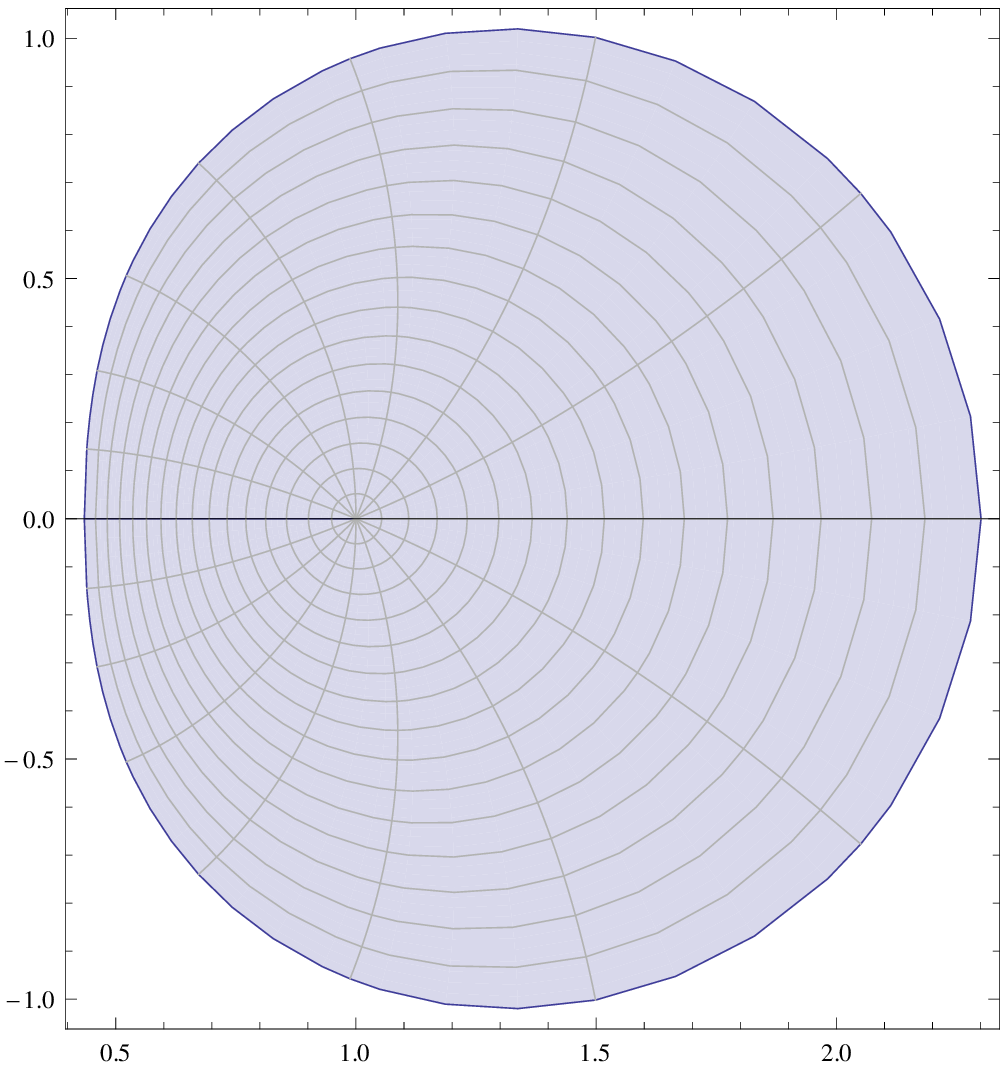}
\hspace*{1.3cm}The image domain $g_{5/6,0}(\D)$
\end{minipage}
\begin{minipage}[b]{0.45\textwidth}
\includegraphics[width=7cm]{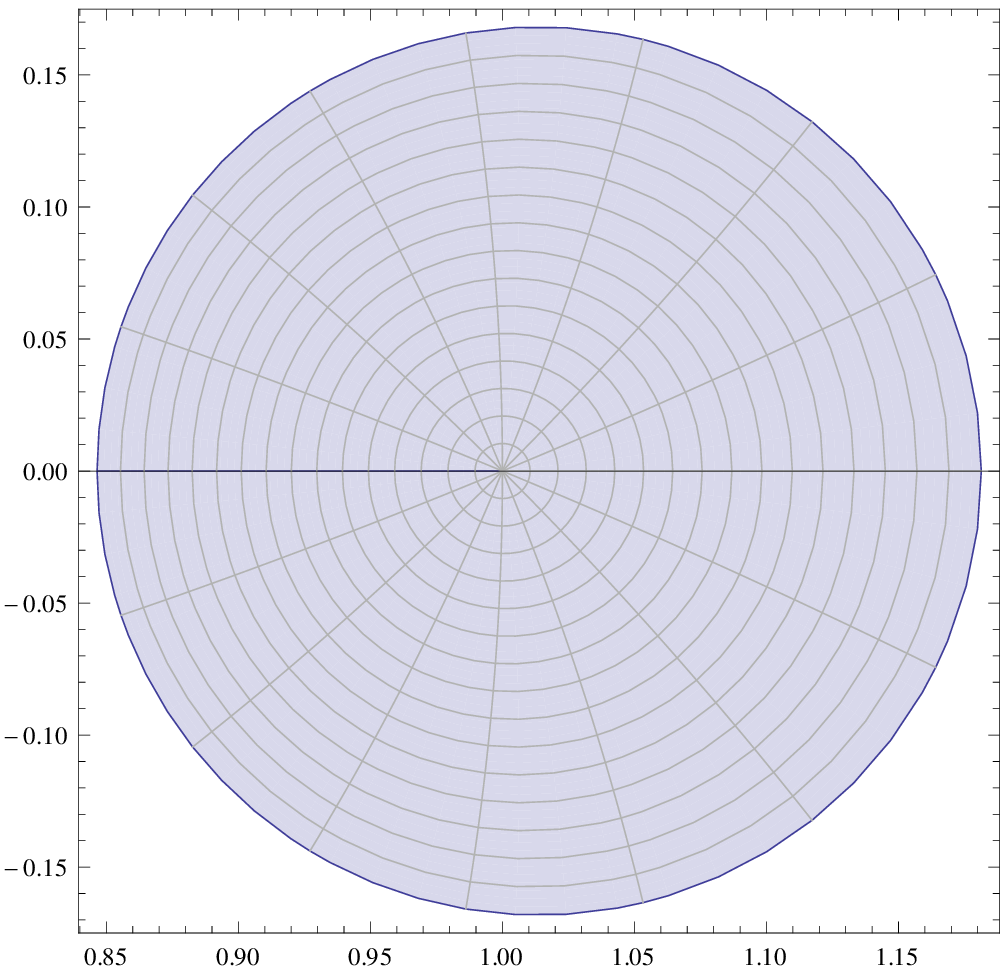}
\hspace*{1.3cm} The image domain $g_{1/6,0}(\D)$
\end{minipage}
\caption{Images of the unit disk under $g_{5/6,0}$ and $g_{1/6,0}$.}
\end{figure}

\begin{figure}
\begin{minipage}[b]{0.5\textwidth}
\includegraphics[width=6.6cm]{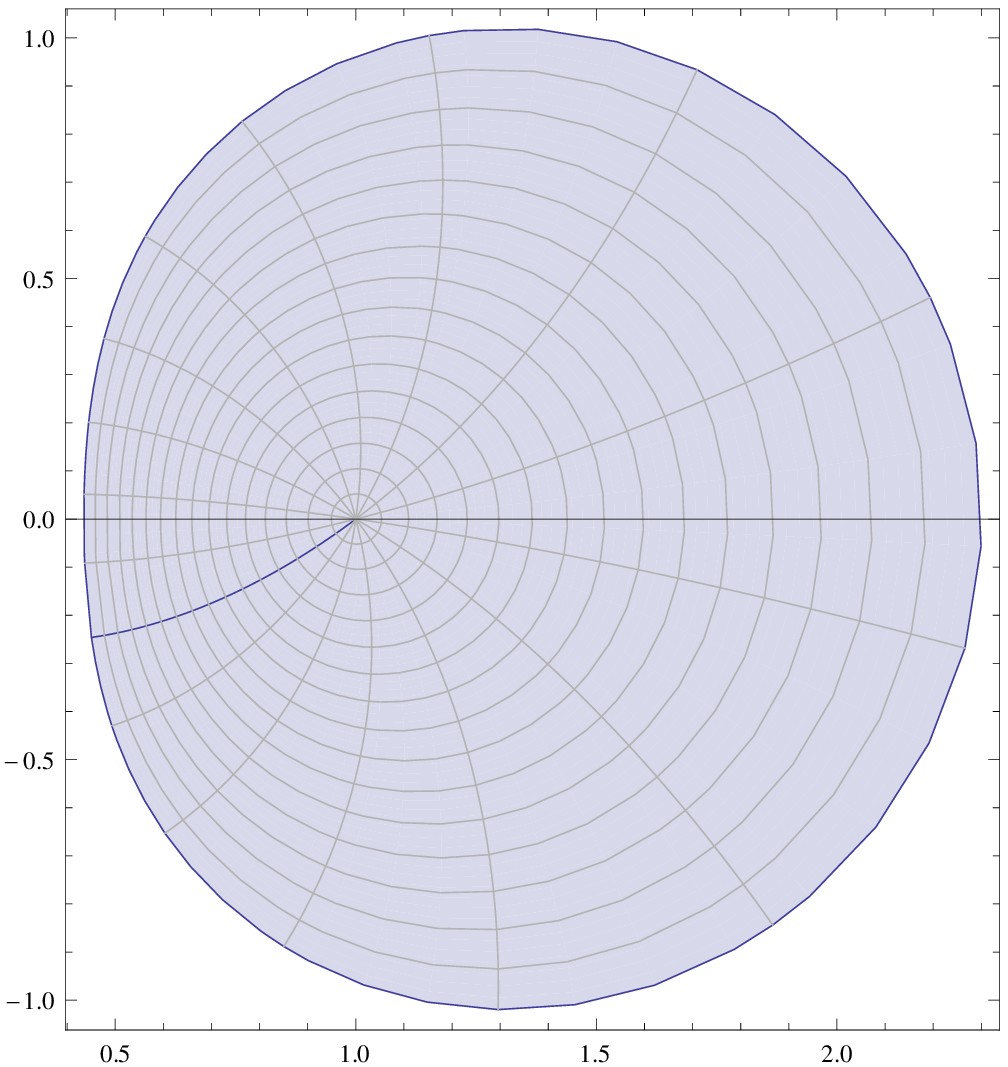}
\hspace*{1cm}The image domain $g_{2/3+i/2,0}(\D)$
\end{minipage}
\begin{minipage}[b]{0.45\textwidth}
\includegraphics[width=6.7cm]{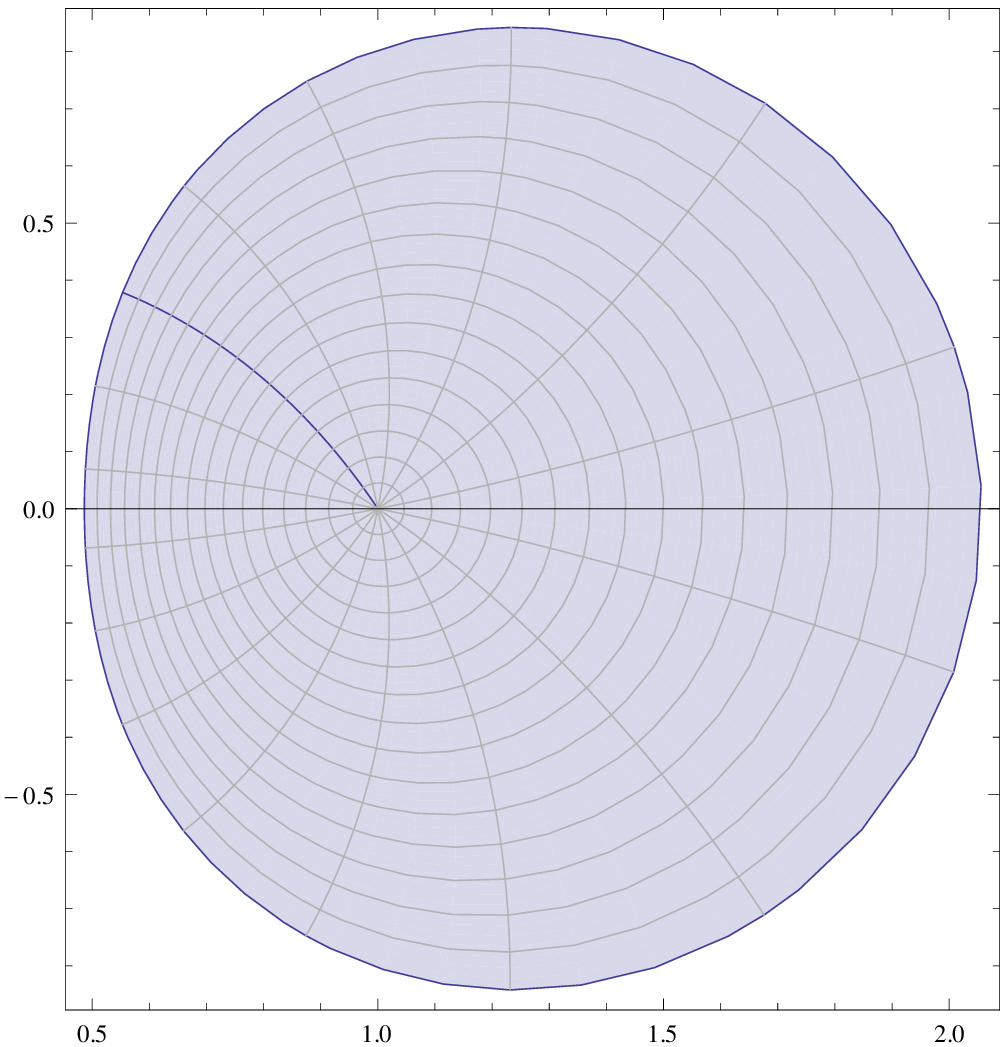}
\hspace*{0.7cm} The image domain $g_{(2-3i)/5,0}(\D)$
\end{minipage}
\caption{Images of the unit disk under $g_{2/3+i/2,0}$ and $g_{(2-3i)/5,0}$.}
\end{figure}

\begin{figure}
\begin{minipage}[b]{0.5\textwidth}
\includegraphics[width=6.8cm]{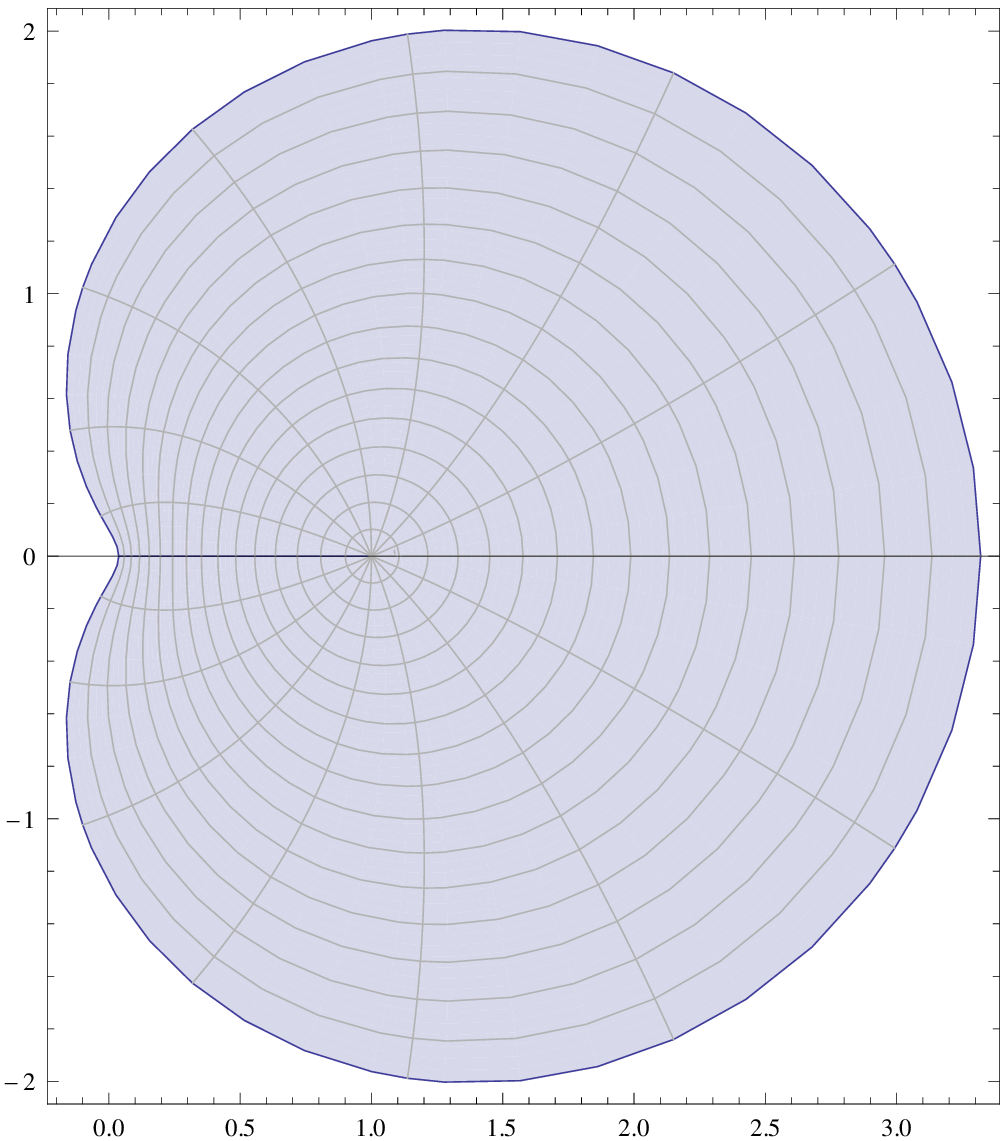}
\hspace*{1cm}The image domain $g_{5/6,-4/5}(\D)$
\end{minipage}
\begin{minipage}[b]{0.45\textwidth}
\includegraphics[width=7.5cm]{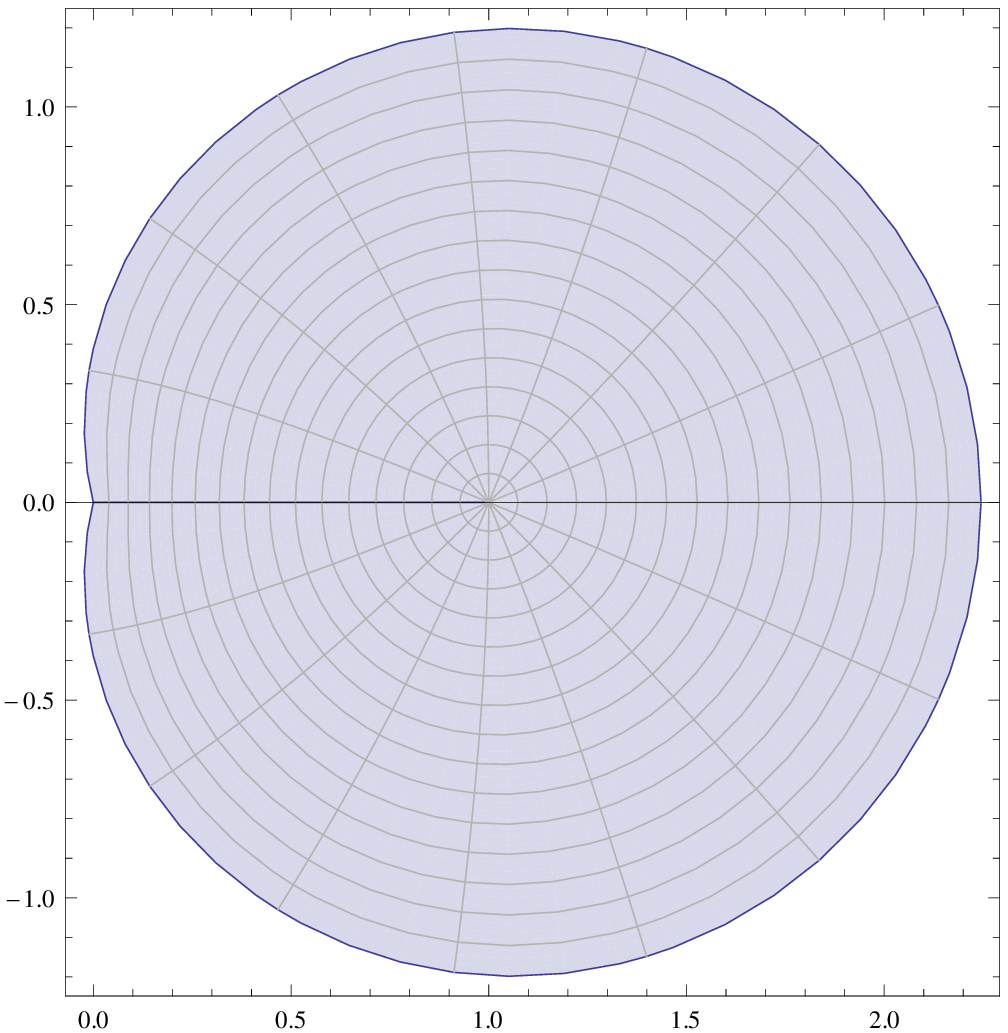}
\hspace*{1cm} The image domain $g_{1/6,-1}(\D)$
\end{minipage}
\caption{Images of the unit disk under $g_{5/6,-4/5}$ and $g_{1/6,-1}$.}
\end{figure}

\begin{figure}
\begin{minipage}[b]{0.5\textwidth}
\includegraphics[width=7.1cm]{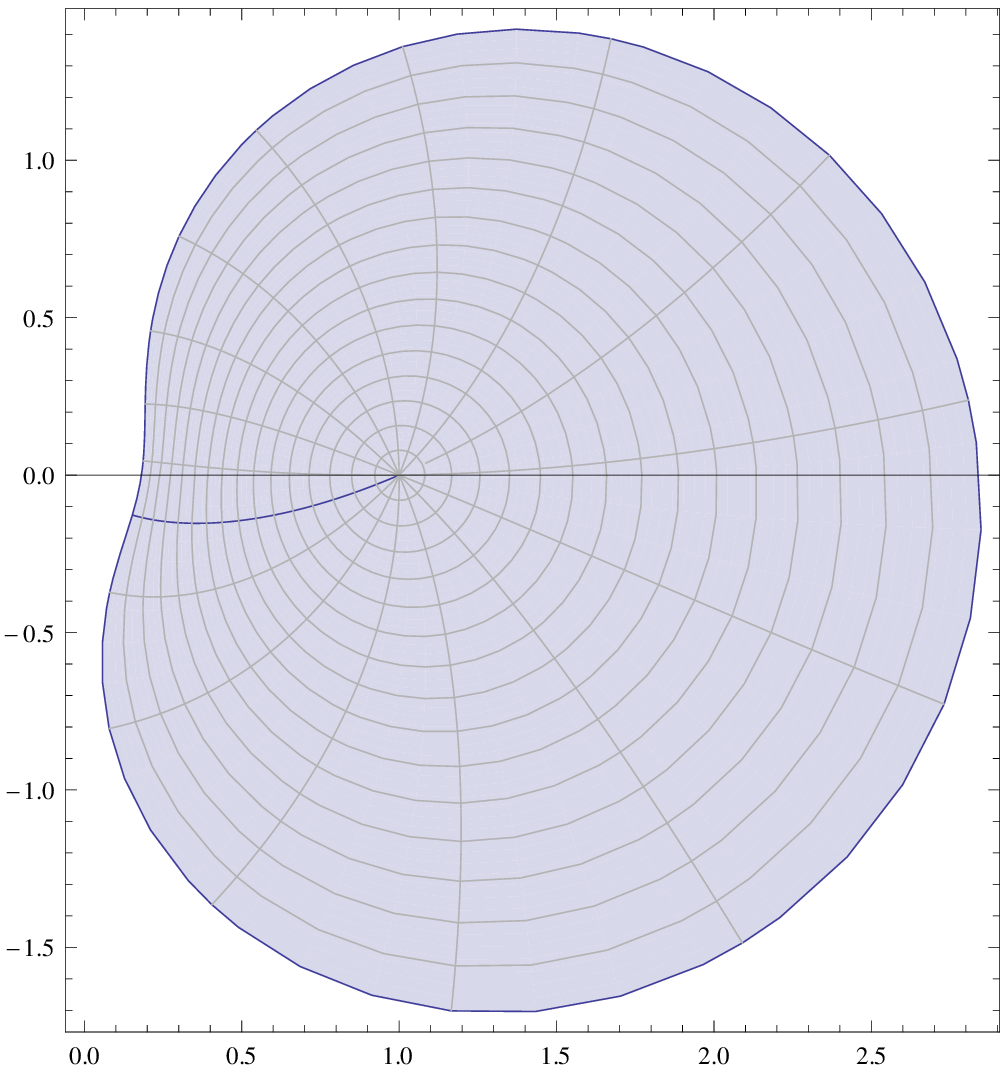}
\hspace*{1cm}The image domain $g_{2/3+i/2,-1/2}(\D)$
\end{minipage}
\begin{minipage}[b]{0.44\textwidth}
\includegraphics[width=7.2cm]{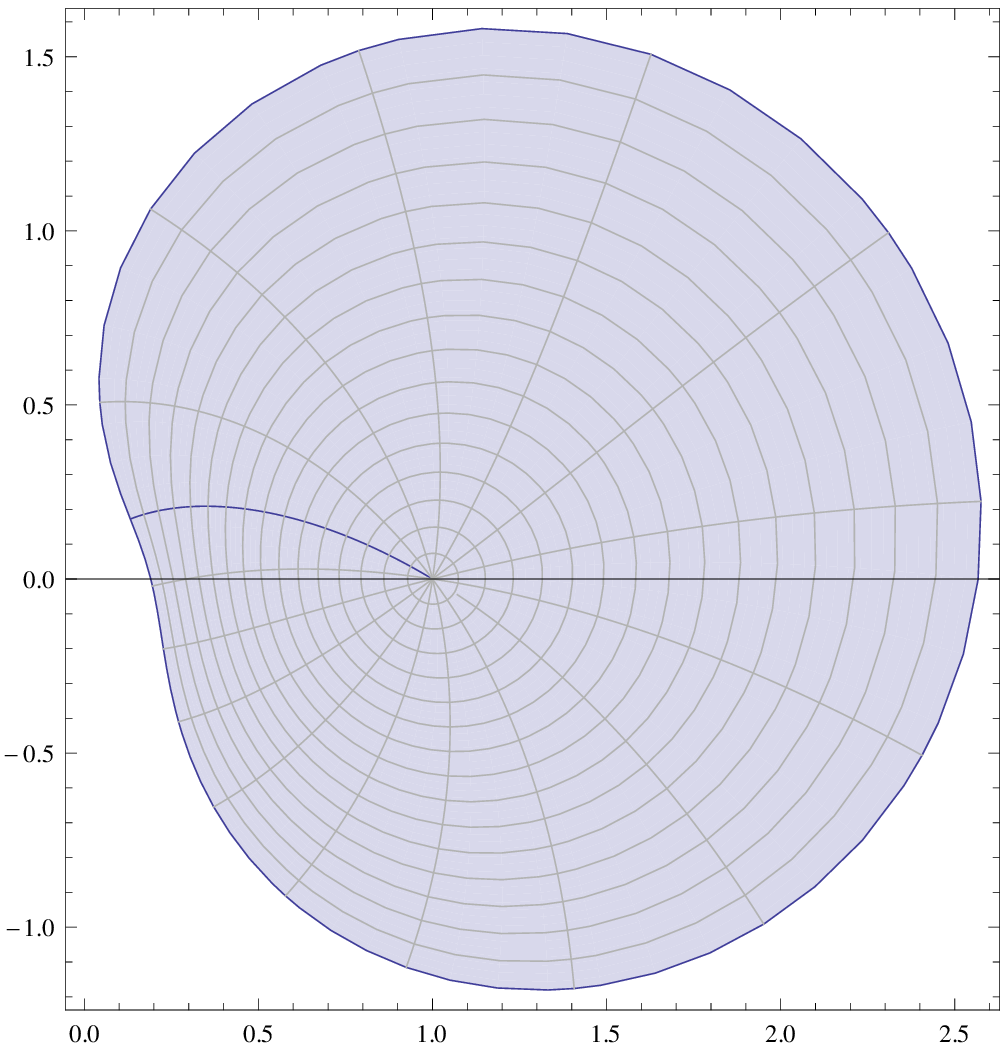}
\hspace*{0.7cm} The image domain $g_{(2-3i)/5,-3/5}(\D)$
\end{minipage}
\caption{Images of the unit disk under $g_{2/3+i/2,-1/2}$ and $g_{(2-3i)/5,-3/5}$.}
\end{figure}

\begin{center}
\begin{tabular}{|c|c|c|c|}
\hline
\textbf{$A$} & \textbf{Approximate Values of } & $B$ & \textbf{ Approximate Values of} \\
  & \textbf{ $E_{A,0}(1)$} & & \textbf{ $E_{A,B}(1)$} \\
\hline
 $5/6$ & $3.03211$ & $-4/5$ & $11.2917$ \\
\hline
 $1/6$ & $0.0884841$ & $-1$ & $4.34607$\\
\hline
 $2/3+i/2$ & $3.03211$ & $-1/2$ & $6.90284$\\
\hline
$(2-3i)/5$ & $2.09682$ & $-3/5$ & $5.4645$ \\
\hline
\end{tabular}

\vspace*{0.2cm}
Table~2
\end{center}

We now state certain consequences of Theorem~\ref{thm2} for the several of the special classes introduced by several authors (refer Table~1).
It is a simple exercise to see that M\"{o}bius transformation $w=\phi (z)$ defined by
$$ w=\phi (z)=\frac{1+Az}{1+Bz}
$$
maps the unit disk $\D$ onto the half-plane
$$ {\rm Re\,} ((1+\overline{A})w) >\frac{1-|A|^2}{2}
$$
whenever $B=-1$ and $A\neq 1$. In particular, if $A=e^{i\alpha}(e^{i\alpha}-2\beta \cos\alpha)$ $(\beta <1)$, then as
remarked in the introduction, the last condition reduces to
$$ {\rm Re\,} (e^{-i\alpha}w)>\beta \cos \alpha.
$$
If $-1<B\leq 0$ and $A\neq B$, then $\phi$ maps  $\D$ onto the disk
$$\left |w -\frac{1-\overline{A}B}{1-B^2} \right | <\frac{|A-B|}{1-B^2}.
$$
This observation helps us to formulate important  special cases.

If we choose $A=1-2\beta$ and $B=-1$ in Theorem~\ref{thm2}, we get

\begin{corollary}\label{cor1}\cite[Theorem 3]{OPW13}
Let $f\in {\es}^*(\beta)$ for some $0\leq \beta <1$.
Then we have
$$\displaystyle \max_{f\in {\es}^*(\beta)}\Delta \left(r,\frac{z}{f}\right)
=4\pi (1-\beta)^2r^2 {}_2F_1(2\beta-1,2\beta-1;2;r^2) \quad \mbox{for $0<r\leq 1$},
$$
where the maximum is attained  by the rotation of the function
$k_{\beta}(z)=\frac{z}{(1-z)^{2(1-\beta)}}$.
\end{corollary}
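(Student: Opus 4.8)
The plan is to read off Corollary~\ref{cor1} as the special case $A=1-2\beta$, $B=-1$ of Theorem~\ref{thm2}, so the work is almost entirely bookkeeping. First I would set up the dictionary between the two classes: by the Robertson entry in Table~1 one has $\es^*(1-2\beta,-1)=\es^*(\beta)$ for $\beta\in[0,1)$, and the hypotheses of Theorem~\ref{thm2} are indeed met with this choice, since $-1\le B<0$ and $A\neq B$ (the latter because $\beta\neq 1$). I would also note that the standing assumption of Theorem~\ref{thm2}, that $z/f(z)$ be a non-vanishing analytic function, is automatic here: every $f\in\es^*(\beta)\subset\es^*$ is univalent with $f(0)=0$, so $f(z)/z$ omits the value $0$ in $\D$ and $z/f(z)$ is a well-defined nowhere-zero analytic function there.

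Next I would substitute $A=1-2\beta$ and $B=-1$ into the closed form
$E_{A,B}(r)=\pi|\overline A-B|^2 r^2\,{}_2F_1(A/B,\overline A/B;2;B^2r^2)$ supplied by Theorem~\ref{thm2}. Since $\beta$ is real, $\overline A=A=1-2\beta$, hence $\overline A-B=(1-2\beta)+1=2(1-\beta)$ and $|\overline A-B|^2=4(1-\beta)^2$; moreover $A/B=\overline A/B=2\beta-1$ and $B^2=1$. This converts $E_{1-2\beta,-1}(r)$ into $4\pi(1-\beta)^2 r^2\,{}_2F_1(2\beta-1,2\beta-1;2;r^2)$, which is precisely the asserted value. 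For the extremal function I would invoke~(\ref{eq3}): $k_{1-2\beta,-1}(z)=z(1-z)^{(2\beta-1)-1}=z/(1-z)^{2(1-\beta)}=k_{\beta}(z)$, so the extremal statement of Theorem~\ref{thm2} immediately yields that the maximum is attained by the rotations of $k_{\beta}$.

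There is essentially no obstacle here beyond the algebra; the only point that deserves a word of care is the validity of the formula at the endpoint $r=1$. The quantity $E_{1-2\beta,-1}(1)$ is a ${}_2F_1$ evaluated at $1$ with parameters $a=b=2\beta-1$ and $c=2$, and such a series converges precisely when $\mathrm{Re}(c-a-b)>0$, i.e. when $4(1-\beta)>0$, which holds for all $\beta\in[0,1)$; equivalently, in the language of the remark following Theorem~\ref{thm2}, one needs $\mathrm{Re}\,A>-1$, and indeed $A=1-2\beta>-1$ for $\beta<1$. Hence the identity is valid on the whole range $0<r\le 1$, which completes the derivation of Corollary~\ref{cor1}.
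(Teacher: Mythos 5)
Your proposal is correct and follows exactly the route the paper takes: Corollary~\ref{cor1} is obtained by specializing Theorem~\ref{thm2} to $A=1-2\beta$, $B=-1$, and your algebra ($|\overline{A}-B|^2=4(1-\beta)^2$, $A/B=\overline{A}/B=2\beta-1$, $B^2=1$, and $k_{1-2\beta,-1}=k_\beta$ via \eqref{eq3}) matches what the paper implicitly does. The extra checks you add (non-vanishing of $z/f$ for univalent $f$, and convergence of the ${}_2F_1$ at $r=1$ since $4(1-\beta)>0$) are sound and, if anything, more careful than the paper's one-line derivation.
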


We remark that when $A=1$ and $B=-1$, Theorem~\ref{thm2} turns  into
\cite[Theorem~A]{OPW13}.
If ${A=(1-2\beta)\alpha}$ and $B=-\alpha$ in Theorem~\ref{thm2}, then we get

\begin{corollary}\label{cor2}\cite[Theorem~1.3]{SS14}
Let $f\in {\mathcal S}^* ((1-2\beta)\alpha,-\alpha)={\mathcal T} (\alpha,\beta)$  for  $0<\alpha\leq 1$ and $0\leq \beta <1$.
Then we have
$$\max_{f\in {\mathcal T}  (\alpha,\beta) } \Delta \left(r,\frac{z}{f}\right)
= 4\pi{\alpha}^2(1-\beta)^2{r}^2 {}_2F_1(2\beta-1,2\beta-1;2;{\alpha}^2
{r}^2),\quad |z|<r
$$
for all $r, \, 0<r\leq 1$. The maximum is attained  by the rotation
of the function $ k_{(1-2\beta)\alpha,-\alpha}(z)$ defined by \eqref{eq2-ext}.
\end{corollary}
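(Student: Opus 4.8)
The plan is to obtain Corollary~\ref{cor2} as a direct specialization of Theorem~\ref{thm2}, taking the Janowski parameters to be $A=(1-2\beta)\alpha$ and $B=-\alpha$. Since $\es^*\bigl((1-2\beta)\alpha,-\alpha\bigr)$ is by definition ${\mathcal T}(\alpha,\beta)$, the only real work is (i) to verify that this pair $(A,B)$ satisfies the hypotheses of Theorem~\ref{thm2}, (ii) to simplify $E_{A,B}(r)$ for these values, and (iii) to identify the resulting extremal function.

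First I would dispose of the hypotheses. Because $\alpha\in(0,1]$ we have $B=-\alpha\in[-1,0)$, and $A=B$ would force $1-2\beta=-1$, i.e.\ $\beta=1$, which is excluded; hence $-1\le B<0$ and $A\ne B$. Moreover the stronger relation $-1\le B<A\le 1$ holds here: $B<A$ is just $-1<1-2\beta$, and $A\le 1$ follows from $\alpha\le 1$ together with $1-2\beta\le 1$. Consequently, by the Janowski inclusion recalled in Section~\ref{sec1}, ${\mathcal T}(\alpha,\beta)=\es^*\bigl((1-2\beta)\alpha,-\alpha\bigr)\subset{\es}^*$, so every $f\in{\mathcal T}(\alpha,\beta)$ is starlike, hence univalent, with $f(z)\ne 0$ for $z\in\D\setminus\{0\}$; since $f(z)=z+\cdots$ near $0$, the function $z/f(z)$ extends to a non-vanishing analytic function on $\D$. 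Thus the standing assumption of Theorem~\ref{thm2} is met.

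Next I would substitute and simplify. As $\alpha,\beta$ are real, $\overline{A}=A=(1-2\beta)\alpha$, so
$$\overline{A}-B=(1-2\beta)\alpha+\alpha=2\alpha(1-\beta),\qquad |\overline{A}-B|^2=4\alpha^2(1-\beta)^2,$$
while $A/B=\overline{A}/B=-(1-2\beta)=2\beta-1$ and $B^2r^2=\alpha^2 r^2$. Inserting these into
$E_{A,B}(r)=\pi|\overline{A}-B|^2 r^2\,{}_2F_1\bigl(A/B,\overline{A}/B;2;B^2r^2\bigr)$ gives exactly
$$4\pi\alpha^2(1-\beta)^2 r^2\,{}_2F_1\bigl(2\beta-1,\,2\beta-1;\,2;\,\alpha^2 r^2\bigr),$$
which is the asserted value of $\max_{f\in{\mathcal T}(\alpha,\beta)}\Delta\bigl(r,z/f\bigr)$ for all $0<r\le 1$.

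Finally, for the extremal statement I would read off from \eqref{eq2-ext} (the case $B\neq 0$) that $k_{(1-2\beta)\alpha,-\alpha}(z)=z(1-\alpha z)^{(A/B)-1}=z(1-\alpha z)^{-2(1-\beta)}$, which is consistent with \eqref{eq3} when $\beta=0$; the fact that equality is attained precisely for its rotations is inherited verbatim from Theorem~\ref{thm2}. I do not expect any genuine obstacle: the only point beyond bookkeeping is justifying that $z/f(z)$ is analytic and non-vanishing on $\D$, and that is precisely where the inclusion ${\mathcal T}(\alpha,\beta)\subset{\es}^*$ (valid because $-1\le B<A\le 1$ here) is used.
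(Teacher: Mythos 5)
Your proposal is correct and follows exactly the route the paper takes: Corollary~\ref{cor2} is obtained by specializing Theorem~\ref{thm2} to $A=(1-2\beta)\alpha$, $B=-\alpha$, and your verification of the hypotheses and the simplification of $E_{A,B}(r)$ and of $k_{(1-2\beta)\alpha,-\alpha}$ are all accurate. The paper states this as an immediate substitution without further detail, so your write-up is, if anything, more careful than the original.
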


The case $\beta =0$ of Corollary \ref{cor2} (i.e. ${A=\alpha}$ and $B=-\alpha$ of Theorem~\ref{thm2}) gives

\begin{example}\label{cor3}\cite[Theorem~3.1]{SS14}
If $f\in {\mathcal S}^* (\alpha,-\alpha):={\mathcal T} (\alpha)$ for some $0<\alpha \leq 1$, then
one has
$$\max_{f\in {\mathcal T} (\alpha)}\Delta \left(r,\frac{z}{f}\right)=2\pi {\alpha}^2r^2(2+{\alpha}^2r^2)
$$
for all $r,\, 0<r\leq 1$, and the maximum is attained by the rotation of the function
$k_{\alpha,-\alpha}(z)=z/(1-\alpha z)^2$.
\end{example}

If we choose $A=e^{i\alpha}(e^{i\alpha}-2\beta \cos\alpha)$ and $B=-1$ in Theorem~\ref{thm2}, we get \cite[Theorem 3]{PW13}.


If $A=1$ and $B=(1-\alpha)/\alpha,\, \alpha \geq 1/2$, then Theorem~\ref{thm2} yields
\begin{corollary}\label{cor6}
If $\alpha \geq 1/2,\, f\in \es ^*(1,(1-\alpha)/\alpha)$,  
then  we have
$$\max_{f\in \es ^*(1,(1-\alpha)/\alpha)}~\Delta \left(r,\frac{z}{f}\right)
=\pi  \left(2-\frac{1}{\alpha}\right)^2 r^2
{}_2F_1\left(\frac{\alpha}{1-\alpha},\frac{\alpha}{1-\alpha};2;\left(\frac{\alpha-1}{\alpha}\right)^2r^2\right),
$$
for $0<r\leq 1$, where the maximum is attained   by the rotation of the function
$k_{1,(1-\alpha)/\alpha}(z)$ defined by $(\ref{eq2-ext}).$
\end{corollary}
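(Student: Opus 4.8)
The plan is to derive Corollary~\ref{cor6} as a direct specialization of Theorem~\ref{thm2}, so the only real work is to check that the substitution $A=1$, $B=(1-\alpha)/\alpha$ is admissible and to simplify the resulting closed form. First I would verify the hypotheses of Theorem~\ref{thm2}: since $\alpha\geq 1/2$ we have $B=(1-\alpha)/\alpha \leq 1$, and also $B>-1$ (indeed $B\geq -1$ fails only in the limit $\alpha\to\infty$, and for finite $\alpha\geq 1/2$ one has $-1<B\leq 1$), so the constraint $-1\leq B<0$ of Theorem~\ref{thm2} is met precisely when $\alpha>1$; for $1/2\leq\alpha\leq 1$ one has $B\geq 0$ and the relevant statement is either Theorem~\ref{thm1} (the case $B=0$, i.e.\ $\alpha=1$) or the $-1<B\leq 0$ portion already recorded via the M\"obius-disk discussion preceding Corollary~\ref{cor1}. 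In all cases $A\neq B$ because $A=1$ while $B=(1-\alpha)/\alpha=1$ would force $\alpha=1/2$, and one checks $B=1$ only at $\alpha=1/2$, where actually $A=B$; so strictly the corollary should be read for $\alpha>1/2$, matching the open condition in Table~1's Singh--Singh row. I would note this and proceed.

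Next I would carry out the substitution in the formula $E_{A,B}(r)=\pi|\overline{A}-B|^2 r^2\,{}_2F_1(A/B,\overline{A}/B;2;B^2r^2)$ from Theorem~\ref{thm2}. With $A=\overline{A}=1$ and $B=(1-\alpha)/\alpha$ we compute
\begin{equation*}
\overline{A}-B = 1-\frac{1-\alpha}{\alpha} = \frac{2\alpha-1}{\alpha} = 2-\frac{1}{\alpha},
\end{equation*}
so $|\overline{A}-B|^2=\bigl(2-\tfrac1\alpha\bigr)^2$; the parameters of the hypergeometric function become
\begin{equation*}
\frac{A}{B}=\frac{\overline{A}}{B}=\frac{1}{(1-\alpha)/\alpha}=\frac{\alpha}{1-\alpha},
\end{equation*}
and the argument becomes $B^2r^2=\bigl((\alpha-1)/\alpha\bigr)^2 r^2$. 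Substituting these into $E_{A,B}(r)$ yields exactly the asserted expression
\begin{equation*}
\pi\Bigl(2-\frac1\alpha\Bigr)^2 r^2\,{}_2F_1\!\left(\frac{\alpha}{1-\alpha},\frac{\alpha}{1-\alpha};2;\left(\frac{\alpha-1}{\alpha}\right)^2 r^2\right),
\end{equation*}
and the extremal function is the rotation of $k_{1,(1-\alpha)/\alpha}(z)$ as given by $(\ref{eq2-ext})$, again by direct transcription from Theorem~\ref{thm2}.

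The main (and essentially the only) obstacle is the bookkeeping at the boundary of the parameter range: when $\alpha=1$ one has $B=0$, so Theorem~\ref{thm2} does not literally apply and one must instead invoke Theorem~\ref{thm1}; I would observe that the limit $\alpha\to 1$ of the stated formula is consistent, since ${}_2F_1(a,a;2;w/a^2)\to {}_0F_1(2;w)$-type behaviour recovers the ${}_0F_1$ expression of Corollary~\ref{cor5} as $\alpha\to1$ (so $B\to 0$), and the constant $(2-1/\alpha)^2\to 1$. One should also confirm the non-vanishing hypothesis on $z/f(z)$: for $f\in\es^*(1,(1-\alpha)/\alpha)$ with $-1<B\leq 1$ the associated $z/f(z)=(1+Bz)^{1-A/B}$ (when $B\neq0$) is zero-free on $\D$ because $1+Bz\neq 0$ there, so the hypothesis is automatic and need only be remarked. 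With these two small checks dispatched, Corollary~\ref{cor6} follows immediately from Theorem~\ref{thm2}.
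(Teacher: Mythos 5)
Your proof is correct and is exactly the paper's (unstated) argument: the corollary is obtained by substituting $A=1$ and $B=(1-\alpha)/\alpha$ into the expression $E_{A,B}(r)=\pi|\overline{A}-B|^2r^2\,{}_2F_1(A/B,\overline{A}/B;2;B^2r^2)$ of Theorem~\ref{thm2}, and your computations of $\overline{A}-B=2-1/\alpha$, $A/B=\alpha/(1-\alpha)$ and $B^2r^2=((\alpha-1)/\alpha)^2r^2$ are all right. Your parameter-range caveats are legitimate and go beyond what the paper records: Theorem~\ref{thm2} literally applies only when $(1-\alpha)/\alpha<0$, i.e.\ $\alpha>1$, the case $\alpha=1$ needs Theorem~\ref{thm1} (consistent, as you check, with the $a\to\infty$ limit ${}_2F_1(a,a;2;r^2/a^2)\to{}_0F_1(2;r^2)$), for $1/2\le\alpha<1$ one has $B>0$, outside the paper's definition of $\es^*(A,B)$, and at $\alpha=1/2$ one even has $A=B$ --- so the corollary as printed tacitly overstates its range, which is a defect of the paper rather than of your argument.
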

If ${A=(b^2-a^2+a)/b}$ and $B=(1-a)/b$ with $a+b\geq 1,\,a\in [b,1+b]$, as a consequence of
Theorem~\ref{thm2} we obtain the following maxima area problem for functions in a class introduced by Silverman
(see Table~1 for the reference).
\begin{corollary}\label{cor7}
Let $ f\in \es ^*((b^2-a^2+a)/b,((1-a)/b))$.
Then we have
$$\max_{f\in \es ^*((b^2-a^2+a)/b,((1-a)/b))}~\Delta \left(r,\frac{z}{f}\right)
=\pi  s_1^2 r^2{}_2F_1\left(s_2,s_2;2;\left(\frac{1-a}{b}\right)^2r^2\right)
\quad \mbox{for $0<r\leq 1$},
$$
where $s_1=(b^2-a^2+2a-1)/b$ and $ s_2=(b^2-a^2+a)/(1-a);\, a+b\geq 1,\,a\in [b,1+b].$
The maximum is attained  by the rotation of the function $k_{(b^2-a^2+a)/b,(1-a)/b)}(z)$
defined by $(\ref{eq2-ext})$.
\end{corollary}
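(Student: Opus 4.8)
The plan is to read off Corollary~\ref{cor7} from Theorem~\ref{thm2} by specializing the parameters; since no new analytic input is needed, the work lies entirely in identifying the hypergeometric data. First I would set $A=(b^2-a^2+a)/b$ and $B=(1-a)/b$, which are real because $a$ and $b$ are real, and restrict to the sub-range $1\le a\le 1+b$ (with $b>0$) in which $B=(1-a)/b$ lies in $[-1,0]$ — this being the range in which the class $\es^*(A,B)$ is considered here. For $1<a<1+b$ one has $-1<B<0$, while $A=B$ would force $b^2-a^2+2a-1=0$, i.e. $b=a-1$, which is exactly the excluded endpoint $a=1+b$; hence $A\neq B$ and Theorem~\ref{thm2} applies (its hypothesis that $z/f(z)$ be analytic and non-vanishing in $\D$ is inherited, and is automatic when $a>b$, since then $f$ is starlike). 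The endpoint $a=1$ gives $B=0$, $A=b$, and is covered instead by Theorem~\ref{thm1}, whose conclusion $\pi b^2r^2\,{}_0F_1(2;b^2r^2)$ agrees with the $B\to 0$ limit of the formula below.

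Next I would evaluate $E_{A,B}(r)=\pi|\overline{A}-B|^2 r^2\,{}_2F_1(A/B,\overline{A}/B;2;B^2r^2)$ at these values. Since $A$ is real, $\overline{A}=A$, and
$$A-B=\frac{b^2-a^2+a}{b}-\frac{1-a}{b}=\frac{b^2-a^2+2a-1}{b}=s_1,$$
so that $|\overline{A}-B|^2=s_1^2$; moreover
$$\frac{A}{B}=\frac{\overline{A}}{B}=\frac{b^2-a^2+a}{1-a}=s_2,\qquad B^2=\left(\frac{1-a}{b}\right)^2.$$
Substituting these three identities into the formula of Theorem~\ref{thm2} yields, for $0<r\le 1$,
$$\max_{f\in\es^*((b^2-a^2+a)/b,(1-a)/b)}\Delta\left(r,\frac{z}{f}\right)=\pi s_1^2 r^2\,{}_2F_1\left(s_2,s_2;2;\left(\frac{1-a}{b}\right)^2 r^2\right),$$
which is the claimed identity.

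Finally, the extremal function is inherited from Theorem~\ref{thm2}: the maximum is attained exactly for the rotations of $k_{A,B}$ from \eqref{eq2-ext}, which for the present $A,B$ is $k_{A,B}(z)=z(1+Bz)^{(A/B)-1}=z\bigl(1+\tfrac{1-a}{b}\,z\bigr)^{s_2-1}$. I expect the only point requiring any care — and it is light bookkeeping rather than a genuine obstacle — to be the first paragraph: confirming that Silverman's parameter region does fall inside the hypotheses $-1\le B<0$, $A\neq B$ of Theorem~\ref{thm2} and recording the endpoint cases; the remainder is the elementary algebra displayed above.
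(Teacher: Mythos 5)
Your proposal is correct and follows exactly the route the paper intends: Corollary~\ref{cor7} is stated there as an immediate specialization of Theorem~\ref{thm2}, obtained by substituting $A=(b^2-a^2+a)/b$ and $B=(1-a)/b$ into $E_{A,B}(r)$, and your algebra identifying $s_1=A-B$, $s_2=A/B$ and $B^2=((1-a)/b)^2$ matches this. Your additional bookkeeping on when Silverman's parameter region actually satisfies $-1\le B<0$ and $A\neq B$ is a point the paper glosses over, but it does not change the argument.
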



In Section~\ref{sec2}, we present useful lemmas which are the main tools to prove our main theorems.


\section{Preparatory Results}\label{sec2}
If $f\in {\mathcal A}$ such that $z/f(z)$ is non-vanishing in $\D$ (eg. the non-vanishing condition is
ensured whenever $f\in \es$), then
\begin{equation}\label{eq-1-sec.2}
\frac{z}{f(z)}=1+\sum_{n=1}^{\infty}b_nz^n, \, z\in \D.
\end{equation}
We first present a necessary coefficient condition for a function $f$ of the form (\ref{eq-1-sec.2})
to be in $\es ^*(A,B).$
\begin{lemma}\label{lem1}
Let $f\in \es ^*(A,B)$ for  $-1\leq B\leq 0$ and $A \neq B$ and $f$ be of the form $(\ref{eq-1-sec.2})$.
Then
$$\sum_{k=1}^{\infty} \left(k^2-|B-A-kB|^2\right)|b_k|^2 \leq |A-B|^2
$$
holds.
\end{lemma}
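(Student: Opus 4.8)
The plan is to translate the subordination condition defining $\es^*(A,B)$ into an equation among the Taylor coefficients of $z/f(z)$, and then apply an area-type (Parseval) inequality to the Schwarz function. First I would start from $f\in\es^*(A,B)$, so that $zf'(z)/f(z)=(1+Aw(z))/(1+Bw(z))$ for some analytic $w:\D\to\overline{\D}$ with $w(0)=0$. Writing $g(z)=z/f(z)=1+\sum_{n\ge1}b_nz^n$, a short computation gives $zf'(z)/f(z)=1-zg'(z)/g(z)$, hence
$$
1-\frac{zg'(z)}{g(z)}=\frac{1+Aw(z)}{1+Bw(z)},
$$
which rearranges to
$$
-\frac{zg'(z)}{g(z)}=\frac{(A-B)w(z)}{1+Bw(z)},\qquad\text{i.e.}\qquad
-\bigl(1+Bw(z)\bigr)\,zg'(z)=(A-B)\,w(z)\,g(z).
$$
This is the key functional identity. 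Solving for $w$ yields $w(z)=\dfrac{-zg'(z)}{(A-B)g(z)+B\,zg'(z)}$, an analytic function on $\D$ with $w(0)=0$ and $|w(z)|\le1$.

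Next I would extract coefficients. Write $zg'(z)=\sum_{k\ge1}k\,b_k z^k$ and $B\,zg'(z)+(A-B)g(z)=(A-B)+\sum_{k\ge1}\bigl(kB+(A-B)\bigr)b_k z^k=(A-B)+\sum_{k\ge1}(A-B+kB)b_k z^k$. From $-zg'(z)=w(z)\cdot\bigl[(A-B)+\sum_{k\ge1}(A-B+kB)b_k z^k\bigr]$, comparing the coefficient of $z^n$ and using $w(0)=0$ shows that $-n b_n-\sum_{k=1}^{n-1}(A-B+kB)b_k\,[z^{n-k}]w$ depends only on $b_1,\dots,b_{n-1}$; more usefully, one sees that the function
$$
\Phi(z):=-zg'(z)=w(z)\Bigl[(A-B)g(z)+B\,zg'(z)\Bigr]
$$
has $n$-th Taylor coefficient $-nb_n$, while the bracketed factor has coefficients $c_0=A-B$, $c_k=(A-B+kB)b_k=-(B-A-kB)b_k$ for $k\ge1$. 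Since $|w|\le1$ on $\D$, the standard subordination/area inequality $\sum_{n\ge1}|\,[z^n]\Phi\,|^2r^{2n}\le\sum_{n\ge0}|\,[z^n]\Psi\,|^2 r^{2n}$ — valid whenever $\Phi=w\Psi$ with $w$ a Schwarz function and applied here with $r\uparrow1$ — gives
$$
\sum_{n=1}^{\infty}n^2|b_n|^2\;\le\;|A-B|^2+\sum_{k=1}^{\infty}|B-A-kB|^2|b_k|^2,
$$
which upon moving the sum to the left is exactly the claimed inequality
$\sum_{k=1}^{\infty}\bigl(k^2-|B-A-kB|^2\bigr)|b_k|^2\le|A-B|^2$.

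The main obstacle is justifying the area inequality $\sum_n|[z^n]\Phi|^2\le\sum_n|[z^n]\Psi|^2$ for the product $\Phi=w\Psi$ with $w$ a Schwarz function, together with the passage to $r=1$. The clean way is to work on $|z|=r<1$: by Parseval on the circle, $\frac1{2\pi}\int_0^{2\pi}|\Phi(re^{i\theta})|^2\,d\theta=\sum_n|[z^n]\Phi|^2 r^{2n}$, and since $|w(re^{i\theta})|\le|w|_{\D}\le1$ (actually $|w(z)|\le|z|$, but $\le1$ suffices) we get $|\Phi(re^{i\theta})|\le|\Psi(re^{i\theta})|$ pointwise, hence $\sum_n|[z^n]\Phi|^2 r^{2n}\le\sum_n|[z^n]\Psi|^2 r^{2n}$; letting $r\to1^-$ and invoking monotone convergence yields the inequality for the full Taylor coefficients (the right-hand side being $|A-B|^2+\sum_k|B-A-kB|^2|b_k|^2$, possibly $+\infty$, in which case there is nothing to prove). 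One should also note $k^2-|B-A-kB|^2=k^2-|A-B+kB|^2$, so the two ways of writing the coefficient agree, and that for $-1\le B\le0$ and $k$ large this coefficient is positive (it behaves like $(1-B^2)k^2$), so the series on the left genuinely constrains the $|b_k|$. This completes the proof.
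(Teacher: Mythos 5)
Your setup is the same as the paper's: you rewrite the subordination as the functional identity $-zg'(z)=w(z)\bigl[(A-B)g(z)+Bzg'(z)\bigr]$ with $w$ a Schwarz function and compare $L^2$-means on circles. The gap is in how you apply Parseval. You apply it to the \emph{full} series on both sides, obtaining
$$\sum_{n=1}^{\infty}n^2|b_n|^2\;\le\;|A-B|^2+\sum_{k=1}^{\infty}|B-A-kB|^2|b_k|^2$$
as an inequality in $[0,+\infty]$, and then "move the sum to the left." That rearrangement is legitimate only when the right-hand series converges, and your dismissal of the divergent case ("nothing to prove") is not correct: the lemma's conclusion is a statement about the partial sums of the \emph{difference} series, which remains meaningful and nontrivial when both individual series diverge. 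This is not a hypothetical worry. Take $B=-1$, so that $k^2-|B-A-kB|^2=2k(1+\real A)-|1+A|^2$; for $-1<\real A\le -1/2$ even the extremal function $z/k_{A,-1}=(1-z)^{1+A}$ has $\sum k^2|c_k|^2=\infty$ and $\sum|k-1-A|^2|c_k|^2=\infty$, while the lemma's assertion reduces to the genuinely substantive bound $2(1+\real A)\sum_k k|b_k|^2\le|1+A|^2\bigl(1+\sum_k|b_k|^2\bigr)$, i.e.\ precisely the Dirichlet-type control the paper is after. Your argument gives only $\infty\le\infty$ there.

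The missing idea is Clunie's truncation, which is how the paper avoids this. Because $w(0)=0$, the Taylor coefficients of $-zg'=w\Psi$ up to order $n$ are determined by the coefficients of $\Psi$ only up to order $n-1$; hence one may replace $\Psi$ by its degree-$(n-1)$ truncation (absorbing the discrepancy into coefficients of index $>n$ on the left) and apply Parseval to get, for every $n$ and every $r\in(0,1]$, the \emph{finite} inequality
$$\sum_{k=1}^{n}k^2|b_k|^2r^{2k-2}\;\le\;|A-B|^2+\sum_{k=1}^{n-1}|B-A-kB|^2|b_k|^2r^{2k},$$
which can be rearranged freely since only finitely many terms appear; letting $r\to1$ and $n\to\infty$ then gives the stated bound on the partial sums with no convergence hypothesis. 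Note also that this finite, radius-$r$ form (the paper's (\ref{lem-eq1})) is exactly what Lemmas~\ref{lem2} and \ref{lem3} consume later, so even in the convergent case your full-series version would not suffice for the rest of the paper. To repair your proof, insert the truncation step before invoking Parseval.
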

\begin{proof}
Denote by $g(z):={z}/{f(z)}$, $f\in \es ^*(A,B)$. Then $g$ has the form (\ref{eq-1-sec.2}) and satisfies the relation
$$\frac{zg'(z)}{g(z)}=1-\frac{zf'(z)}{f(z)}\prec 1-\frac{1+Az}{1+B z} =\frac{(B-A)z}{1+B z}, \quad z\in \D.
$$
Then by the definition of subordination, there exists an analytic function $ w:\D \rightarrow \overline{\D}$
with $w(0)=1$ such that
$$\frac{zg'(z)}{g(z)}=\frac{(B-A)zw(z)}{1 +Bzw(z)}, \quad z\in \D.
$$
Writing this in series form, we get
$$\sum_{k=1}^{\infty}kb_kz^{k-1}=\left((B-A)+\sum_{k=1}^{\infty}(B-A-kB)b_kz^k\right)w(z);
$$
or equivalently
$$\sum_{k=1}^{n}kb_kz^{k-1}+\sum_{k=n+1}^{\infty}c_kz^{k-1} =\left((B-A)+\sum_{k=1}^{n-1}(B-A-kB)b_kz^k\right)w(z)
$$
for certain coefficients $c_k$. Since $|w(z)|<1$ in $\D$, Parseval-Gutzmer formula
(see also Clunie's method \cite{Clu59} and \cite{CK60,Rob70,Rog43}), we obtain
$$\sum_{k=1}^{n}k^2|b_k|^2{r}^{2k-2}
\leq |A-B|^2 +\sum_{k=1}^{n-1}|B-A-kB|^2|b_k|^2{r}^{2k}.
$$
or equivalently,
\begin{equation}\label{lem-eq1}
\sum_{k=1}^{n} k^2 |b_k|^2{r}^{2k-2}
 -\sum_{k=1}^{n-1} |B-A-kB|^2  |b_k|^2{r}^{2k}
 \leq |A-B|^2.
\end{equation}
If we take $r=1$ and allow $n\rightarrow \infty$, then we obtain
the desired inequality
$$\sum_{k=1}^{\infty} \left(k^2-|B-A-kB|^2 \right)|b_k|^2 \leq |A-B|^2 .
$$
This completes the proof of our lemma.
\end{proof}
\bigskip

\begin{lemma}\label{lem2}
Let $0<|A|\leq 1$ and  $f\in \es ^*(A,0)$. For $|z|<r,$ suppose that
$$\frac{z}{f(z)}= 1+ \sum_{k=1}^{\infty}b_kz^k ~\mbox{and}~ e^{-Az}
=1+ \sum_{k=1}^{\infty}c_kz^k,\quad  r\in (0,1].
$$
Then
\begin{equation}\label{eq5}
\sum_{k=1}^N k|b_k|^2 {r}^{2k} \leq \sum_{k=1}^{N}k|c_k|^2 {r}^{2k} 
\end{equation}
holds for each $N\in \mathbb{N}$.
\end{lemma}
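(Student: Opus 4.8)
\textbf{Proof plan for Lemma~\ref{lem2}.}
The inequality to be established compares, weighted by $k$, the partial sums of $|b_k|^2r^{2k}$ against those of $|c_k|^2r^{2k}$, where $c_k$ are the (completely explicit) Taylor coefficients of $e^{-Az}$, namely $c_k=(-A)^k/k!$. The plan is to reduce the problem to the coefficient inequality already obtained in Lemma~\ref{lem1}, specialized to $B=0$. When $B=0$, the relation $zg'(z)/g(z) = -Azw(z)$ (with $g=z/f$, $|w|\le 1$, $w(0)=1$) gives in series form $\sum_{k\ge 1} k b_k z^{k-1} = (-A + \sum_{k\ge 1}(-A) b_k z^k)w(z)$, wait more precisely $\sum_{k\ge1}kb_kz^{k-1} = \bigl(-A)\,w(z)\bigl(1+\sum_{k\ge1}b_kz^k\bigr)=(-A)w(z)\,g(z)/? $ — one must read off that the right-hand coefficient list is $-A$ times that of $g$, i.e. the recursion is $k b_k = -A(b_{k-1} + (\text{tail from }w))$. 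The cleaner route is to truncate exactly as in the proof of Lemma~\ref{lem1}: for each fixed $N$, write
$$\sum_{k=1}^{N}kb_kz^{k-1} + \sum_{k=N+1}^{\infty}\tilde c_k z^{k-1} = \Bigl(-A + \sum_{k=1}^{N-1}(-A)b_k z^k\Bigr)w(z),$$
apply the Parseval–Gutzmer inequality with $|w|\le 1$ on $|z|=r$, and obtain
$$\sum_{k=1}^{N}k^2|b_k|^2 r^{2k-2} \le |A|^2\Bigl(1+\sum_{k=1}^{N-1}|b_k|^2 r^{2k}\Bigr).$$
So after multiplying by $r^2$ we have the one-parameter family of inequalities $\sum_{k=1}^{N}k^2|b_k|^2r^{2k} \le |A|^2 r^2 + |A|^2\sum_{k=1}^{N-1}|b_k|^2 r^{2k}$ for all $N$ and all $r\in(0,1]$.

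The second step is to show that $c_k=(-A)^k/k!$ makes these inequalities \emph{equalities}, or at least saturates the relevant induction. Indeed $e^{-Az}$ is exactly $z/k_{A,0}(z)$, the extremal function, and it satisfies $zg'/g = -Az$ identically, i.e. the case $w\equiv 1$; plugging $b_k=c_k$ into the displayed bound and checking that $k^2|c_k|^2 = |A|^2|c_{k-1}|^2$ (which is immediate from $c_k = -Ac_{k-1}/k$) shows the chain of inequalities collapses to equalities term by term. Then I would run an induction on $N$: assuming $\sum_{k=1}^{N-1}k|b_k|^2r^{2k} \le \sum_{k=1}^{N-1}k|c_k|^2r^{2k}$, I want to upgrade to $N$. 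From the Parseval bound, $N^2|b_N|^2r^{2N} \le |A|^2r^2 + |A|^2\sum_{k=1}^{N-1}|b_k|^2r^{2k} - \sum_{k=1}^{N-1}k^2|b_k|^2r^{2k}$; the analogous identity for the $c$'s reads $N^2|c_N|^2r^{2N} = |A|^2r^2 + |A|^2\sum_{k=1}^{N-1}|c_k|^2r^{2k} - \sum_{k=1}^{N-1}k^2|c_k|^2r^{2k}$. Subtracting, and using the induction hypothesis together with the elementary fact that $|A|^2\le k^2$ for $k\ge1$ (so the coefficient of each $|b_k|^2r^{2k} - |c_k|^2r^{2k}$ term has a favorable sign), should yield $N^2(|b_N|^2-|c_N|^2)r^{2N}\le 0$, hence the desired step; but this needs the induction hypothesis in the \emph{stronger, unweighted} form $\sum_{k=1}^{N-1}|b_k|^2r^{2k}\le\sum_{k=1}^{N-1}|c_k|^2r^{2k}$ too, so I would carry both weighted and unweighted versions through the induction simultaneously, or phrase the hypothesis as majorization of the partial sums.

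The main obstacle I anticipate is precisely the bookkeeping in that induction: the Parseval inequality gives a bound on $\sum_{k=1}^N k^2|b_k|^2r^{2k}$ involving $\sum_{k=1}^{N-1}|b_k|^2r^{2k}$ with weight $1$, so one is juggling two different weightings ($k^2$ on the left, $1$ on the right) and needs a monotone-rearrangement or Abel-summation argument to convert a majorization statement about one weighting into the other. A clean way around this is to prove the sharper claim that the sequence of partial sums $S_N(r)=\sum_{k=1}^N |b_k|^2 r^{2k}$ is dominated by $\Sigma_N(r)=\sum_{k=1}^N|c_k|^2r^{2k}$ for every $N$ and every $r\in(0,1]$ — this follows by induction directly from the recursive-type bound above since the map $t\mapsto |A|^2r^2 + |A|^2 t$ driving $N^2|b_N|^2r^{2N}$ is increasing in $t$ — and then deduce \eqref{eq5} by summation by parts, writing $\sum k\,a_k = \sum_{j}(S_j - S_{j-1})$-type manipulations with $a_k=|b_k|^2r^{2k}$, using $S_N\le\Sigma_N$ and the nonnegativity of the increments of the weight $k$. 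Once the unweighted majorization of partial sums is in hand, the weighted inequality \eqref{eq5} is a formal consequence, so the real content is the unweighted induction, which is short.
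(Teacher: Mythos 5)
Your first step---the truncated Clunie/Parseval inequality $\sum_{k=1}^{N}k^2|b_k|^2r^{2k}\le |A|^2r^2+|A|^2\sum_{k=1}^{N-1}|b_k|^2r^{2k}$, together with the observation that the coefficients of $e^{-Az}$ turn each of these into an equality---is exactly the paper's starting point (it is inequality (\ref{eq6}), obtained from Lemma~\ref{lem1} with $B=0$). But both of your proposed ways of finishing from there contain a genuine gap. In the induction on $N$, subtracting the $c$-identity from the $b$-inequality gives $N^2(|b_N|^2-|c_N|^2)r^{2N}\le\sum_{k=1}^{N-1}(|A|^2-k^2)(|b_k|^2-|c_k|^2)r^{2k}$. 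Since $|A|^2-k^2\le 0$, making the right-hand side nonpositive would require $|b_k|^2\ge|c_k|^2$ for each $k$---the opposite of anything your induction hypothesis supplies, and false in general. (The target of that step, the termwise bound $|b_N|\le|c_N|$, is also strictly stronger than the lemma and is not available: subordination does not give termwise coefficient domination.) The fallback via the unweighted partial sums $S_N\le\Sigma_N$ has the same sign problem in its own induction step (the subtracted term $\sum_{k=1}^{N-1}k^2|b_k|^2r^{2k}$ enters with the unfavorable sign), and, more importantly, even if $S_k\le\Sigma_k$ held for every $k$ it would \emph{not} formally imply \eqref{eq5}: Abel summation with the increasing weight $k$ gives $\sum_{k=1}^{N}k\,a_k=NS_N-\sum_{k=1}^{N-1}S_k$, and the term $-\sum_{k=1}^{N-1}S_k$ points the wrong way. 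Majorization of partial sums transfers to weighted sums only for nonnegative \emph{decreasing} weights.

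The paper's proof is built precisely to get around this obstruction. It treats the $N$ truncated inequalities as a linear system: multipliers $\lambda_{n,N}$ are determined by Cramer's rule so that the $\lambda$-weighted combination of the left-hand sides is identically $\sum_{k=1}^{N}k|b_k|^2r^{2k}$; granted $\lambda_{n,N}\ge 0$, this yields $\sum_{k=1}^{N}k|b_k|^2r^{2k}\le A^2r^2\sum_{n=1}^{N}\lambda_{n,N}=\sum_{k=1}^{N}k|c_k|^2r^{2k}$, the last equality holding because the extremal coefficients saturate each of the $N$ inequalities. The entire technical content is then the positivity of the multipliers, proved by deriving a recursion for $\lambda_{k,N}$ in $N$, passing to the limit $\lambda_k$ in \eqref{eq9}, and bounding the resulting series by the telescoping comparison of Step II. To repair your argument you would need to supply something equivalent to this positivity step; no straightforward induction on $N$ or rearrangement of partial sums appears to suffice.
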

\begin{proof}
Clearly, it suffices to prove the lemma for $0<A\leq 1$.
From Lemma~\ref{lem1}, using the equation (\ref{lem-eq1}) for $B=0$, and then multiplying the
resulting equation by $r^2$ on both sides shows that	
\begin{equation}\label{eq6}
\sum_{k=1}^{n-1}(k^2-A^2r ^2)|b_k|^2{r}^{2k}+n^2|b_n|^2{r}^{2n}
\leq  A^2 r^2.
\end{equation}
The function $e^{Az}$ clearly shows that the equality, when $n\to \infty$, in $(\ref{eq6})$
attains with $b_k=c_k.$

\noindent
{ \bf Step-I: \, Cramer's Rule.}\\
We consider the inequalities corresponding to (\ref{eq6}) for $n = 1,\ldots, N$
and multiply the $n$th coefficient
by a factor $\lambda_{n,N}$. These factors are chosen in such a way
that the addition of the left sides
of the modified inequalities results the left side of (\ref{eq5}).
For the calculation of the
factors $\lambda_{n,N}$ we get the following system of linear equations
\begin{equation}\label{eq7}
k=k^2\lambda_{k,N}+\sum_{n=k+1}^{N} {\lambda}_{n,N} (k^2-A^2r ^2),
\, k = 1,\ldots, N.
\end{equation}

Since the matrix of this system is an upper triangular matrix with positive integers
as diagonal elements, the solution of this system is uniquely determined.
Cramer's rule allows us to write the solution of the system (\ref{eq7}) in the form
$$\lambda_{n,N}=\frac{((n-1)!)^2}{(N! )^2} \,
\mbox{Det$\,A_{n,N}$},
$$
where $A_{n,N}$ is the $(N-n+1)\times(N-n+1)$ matrix constructed as follows:\\
$$A_{n,N}= \left[ {\begin{array}{cccc}
   n & n^2-A^2r^2 & \cdots & n^2-A^2 r^2\\
   n+1 & (n+1)^2 & \cdots & (n+1)^2 -A^2 r^2\\
   \vdots & \vdots & \vdots & \vdots \\
   N & 0 & \cdots & N^2\\
  \end{array} } \right].
$$
Determinants of these matrices can be obtained by expanding according to Laplace's rule
with respect to the last row, wherein the first coefficient is $N$ and the last one is $N^2$.
The rest of the entries are zeros. This expansion and a mathematical induction results in
the following formula. If $k\leq N-1$, then
$$\lambda_{k,N}=\lambda_{k,N-1}-\frac{1}{N}\left(1-\frac{A^2r^2}{k^2}\right)
    \prod_{m=k+1}^{N-1}\left(\frac{A^2r^2}{m^2}\right).
$$
For fixed $k\in \mathbb{N}$ and $N\geq k,$
we see that the sequence $\{\lambda_{k,N}\}$ is strictly non-increasing, i.e. $\lambda_{k,N}-\lambda_{k,N-1}<0$ with
\begin{equation}\label{eq9}
 \lambda_k:=\lim_{N\rightarrow \infty}\lambda_{k,N}=\frac{1}{k}-
 \left(1-\frac{A^2r^2}{k^2}\right)
 \sum_{n=k+1}^{\infty}\frac{1}{n}\prod_{m=k+1}^{n-1} \left(\frac{A^2r^2}{m^2}\right).
\end{equation}

To prove that $\lambda_{k,N} > 0$ for all $N \in \mathbb{N}, 1 \leq k \leq N$,
it is adequate to show that $\lambda_{k}\geq 0$
for $k \in \mathbb{N}$. This will be completed in Step II. But before that we want to remark that
the proof of the said inequality is sufficient for the proof of the theorem, since,
as we remarked for (\ref{eq6}), equality holds for $b_k = c_k $.

\noindent
{\bf Step-II: \,Positivity of the Multipliers.}\\
Let for an abbreviation
$$S_k=\sum_{n=k+1}^{\infty}\frac{1}{n}\prod_{m=k+1}^{n-1} \left(\frac{A^2r^2}{m^2}\right), \quad k\in \mathbb{N}.
$$
We now show that
$$S_k\leq \frac{1}{k\left(1-\frac{A^2r^2}{k^2}\right)}.
$$
From the relation (\ref{eq9}), we have
$$\lambda_k=\frac{1}{k}-S_k+\left(\frac{A^2r^2}{k^2}\right)S_k.
$$
Again set for an abbreviation
$$ T_k= \frac{1}{k}+\left(\frac{A^2r^2}{k^2}\right)S_k.
$$
It is enough to show that
\begin{equation}\label{eq10}
 T_k\leq  \frac{1}{k\left(1-\frac{A^2r^2}{k^2}\right)}.
\end{equation}
 To show (\ref{eq10}) we use the inequality
 \begin{equation}\label{eq11}
 \frac{1}{n {\left(1-\frac{A^2r^2}{n^2}\right)}}>\frac{1}{(n+1){\left(1-\frac{A^2r^2}{(n+1)^2}\right)}}
\end{equation}
and the identity
\begin{equation}\label{eq12}
 \frac{1}{n\left(1-\frac{A^2r^2}{n^2}\right)}=\frac{1}{n}+\frac{\frac{A^2r^2}{n^2}}{n\left(1-\frac{A^2r^2}{n^2}\right)}
\end{equation}
which are admissible for each $n\in \mathbb{N}$.
Repeated application of (\ref{eq11}) and (\ref{eq12})
for $n=k,k+1,\ldots, P$ results the inequality
$$\frac{1}{k\left(1-\frac{A^2r^2}{k^2}\right)}> \sum_{n=k}^{P}\frac{1}{n}\prod_{m=k}^{n-1}\left(\frac{A^2r^2}{m^2}\right)
   +\frac{\prod_{m=k}^{P}\left(\frac{A^2r^2}{m^2}\right)}{P\left(1-\frac{A^2r^2}{P^2}\right)}
   =: S_{k,P}+R_{k,P},\quad ~\mbox{for $k\leq P$}.
$$
Since $R_{k,P}>0$ and taking limit $P\rightarrow \infty$, we obtain
$$\frac{1}{k\left(1-\frac{A^2r^2}{k^2}\right)}  \geq \lim_{P\rightarrow \infty}S_{k,P}=
\sum_{n=k}^{\infty}\frac{1}{n}\prod_{m=k}^{n-1}\left(\frac{A^2r^2}{m^2}\right).
$$
Hence, we get the relation~(\ref{eq10}).
The proof of our lemma is complete.
\end{proof}



\bigskip

\begin{lemma}\label{lem3}
Let $-1\leq B<0$, $A\neq B$ and  $f \in \es ^*(A,B)$. For $|z|<r,$ suppose that
$$\frac{z}{f(z)}= 1+ \sum_{k=1}^{\infty}b_kz^k ~\mbox{and}~ (1-Bz)^{1-A/B}
=1+ \sum_{k=1}^{\infty}c_kz^k,\quad  r\in (0,1].
$$
Then
\begin{equation}\label{eq5-lem3}
\sum_{k=1}^N k|b_k|^2 {r}^{2k} \leq \sum_{k=1}^{N}k|c_k|^2 {r}^{2k},\, N\in \mathbb{N}
\end{equation}
is recognized.
\end{lemma}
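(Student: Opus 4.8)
The plan is to mimic, mutatis mutandis, the argument already carried out for Lemma~\ref{lem2}, with the role of $e^{Az}$ now played by the extremal function $k_{A,B}(z)=z(1+Bz)^{(A/B)-1}$, so that $(1-Bz)^{1-A/B}$ plays the role of $e^{-Az}$. First I would invoke Lemma~\ref{lem1} in the sharpened, partial-sum form~(\ref{lem-eq1}): for each $n\in\mathbb N$ and $0<r\le 1$,
\begin{equation*}
\sum_{k=1}^{n} k^2 |b_k|^2 r^{2k-2}-\sum_{k=1}^{n-1} |B-A-kB|^2 |b_k|^2 r^{2k}\le |A-B|^2 .
\end{equation*}
Multiplying through by $r^2$ and writing $\mu_k:=|B-A-kB|^2$ (note $\mu_k=|{-A}+(1-k)B|^2$, so $\mu_k$ depends on $k$, unlike the constant $A^2$ in Lemma~\ref{lem2}), I obtain for $n=1,\dots,N$ the family of inequalities
\begin{equation*}
\sum_{k=1}^{n-1}(k^2-\mu_k r^{2})|b_k|^2 r^{2k}+n^2|b_n|^2 r^{2n}\le |A-B|^2 r^2 .
\end{equation*}
The equality case, as $n\to\infty$, is attained for $b_k=c_k$, the Taylor coefficients of $z/k_{A,B}(z)=(1+Bz)^{1-A/B}$, since $k_{A,B}\in\es^*(A,B)$ forces equality in Lemma~\ref{lem1} step by step; I would record this as the analogue of the remark following~(\ref{eq6}).

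Next I would run the Cramer's-rule computation of Step~I verbatim, but with $A^2 r^2$ replaced by the $k$-dependent quantity $\mu_k r^{2}$ in the off-diagonal entries. One seeks multipliers $\lambda_{n,N}>0$ so that the weighted sum of left-hand sides of the above inequalities equals $\sum_{k=1}^{N}k|b_k|^2 r^{2k}$; this gives the triangular linear system
\begin{equation*}
k=k^2\lambda_{k,N}+\sum_{n=k+1}^{N}\lambda_{n,N}\,(k^2-\mu_k r^{2}),\qquad k=1,\dots,N,
\end{equation*}
which, being upper triangular with positive diagonal, has a unique solution given by Cramer's rule. Expanding the relevant determinant along its last row yields the recursion
\begin{equation*}
\lambda_{k,N}=\lambda_{k,N-1}-\frac1N\Bigl(1-\frac{\mu_k r^{2}}{k^2}\Bigr)\prod_{m=k+1}^{N-1}\frac{\mu_{m+1} r^{2}}{m^2}
\end{equation*}
(the precise indexing of the product is something I would pin down carefully — the shift by one in $\mu$ relative to the index $m$ is exactly the bookkeeping that replaces the clean $A^2r^2/m^2$ of Lemma~\ref{lem2}). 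Taking $N\to\infty$ gives $\lambda_k=\lim_N\lambda_{k,N}$ as an explicit series, and, as before, it suffices to prove $\lambda_k\ge 0$ for all $k$, because equality throughout holds for the admissible choice $b_k=c_k$.

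Finally, the positivity step (Step~II) is where I expect the real work to lie, and it is the main obstacle: in Lemma~\ref{lem2} the key monotonicity inequality $\frac{1}{n(1-A^2r^2/n^2)}>\frac{1}{(n+1)(1-A^2r^2/(n+1)^2)}$ and the matching partial-fraction identity were both transparent because $A^2r^2$ was constant; here I must replace $A^2r^2/m^2$ by $\mu_{m} r^{2}/m^2$ (or its shifted version), and I need to verify that the analogue of~(\ref{eq11}) still holds and that the telescoping bound on $S_k$ still closes. The cleanest route is probably to observe that $\mu_k=|A-B|^2-2\,\mathrm{Re}\!\bigl((\overline{A}-\overline B)B\bigr)(k-1)+|B|^2(k-1)^2$ is a quadratic in $k$ with positive leading coefficient $|B|^2$, together with the elementary fact $\mu_k/k^2\to|B|^2\le 1$ and $\mu_k\le k^2$ for all $k\ge1$ (which is precisely the content of Lemma~\ref{lem1} guaranteeing the coefficients in the sum are nonnegative). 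Using $\mu_k r^2/k^2<1$ one re-derives the chain
\begin{equation*}
\frac{1}{k\bigl(1-\mu_k r^{2}/k^2\bigr)}>\sum_{n=k}^{P}\frac1n\prod_{m=k}^{n-1}\frac{\mu_{\cdot}\,r^{2}}{m^2}+R_{k,P},\qquad R_{k,P}>0,
\end{equation*}
by repeated use of the appropriate monotonicity/identity pair, and letting $P\to\infty$ yields $\lambda_k\ge 0$, completing the proof. I would double-check the two places where the $k$-dependence of $\mu_k$ could break monotonicity — namely the inductive step producing the recursion for $\lambda_{k,N}$ and the inequality replacing~(\ref{eq11}) — since everything else transfers formally from the proof of Lemma~\ref{lem2}.
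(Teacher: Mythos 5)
Your overall strategy --- rerunning the proof of Lemma~\ref{lem2} with $A^2$ replaced by the $k$-dependent quantity $\mu_k=|B-A-kB|^2$ --- is exactly the paper's, and your Step~I (the family of inequalities, the multipliers $\lambda_{n,N}$, Cramer's rule, the recursion, and the reduction to $\lambda_k\ge 0$ via the equality case $b_k=c_k$) matches the paper's Step~I, up to the index bookkeeping you flag: writing $\phi=1-A/B$, the paper's product factor is $\mu_m r^2/m^2=\left|1-\phi/m\right|^2B^2r^2$, with no shift to $\mu_{m+1}$.

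The genuine gap is in your Step~II. You assert that $\mu_k\le k^2$ for all $k\ge 1$ and call this ``precisely the content of Lemma~\ref{lem1} guaranteeing the coefficients in the sum are nonnegative.'' Lemma~\ref{lem1} guarantees no such thing: it only bounds the \emph{signed} sum $\sum(k^2-\mu_k)|b_k|^2$, and the individual coefficients $k^2-\mu_k$ may well be negative. Indeed $\mu_1=|A|^2$, and the lemma imposes no bound on $|A|$; the paper's Table~3 records, e.g., $A=2+i$, $r=0.5$, for which $U_1:=1-\mu_1r^2/1^2=-5.25<0$. Whenever some $U_k<0$, the quantity $\frac{1}{k\left(1-\mu_kr^2/k^2\right)}$ is negative and your telescoping chain of inequalities is meaningless, so the positivity argument collapses exactly where it is needed. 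The repair --- which is what the paper does --- is a case split: if $U_k<0$, the recursion $\lambda_{k,N}=\lambda_{k,N-1}-\frac{1}{N}U_k\prod_{m=k+1}^{N-1}(1-U_m)$ already shows $\{\lambda_{k,N}\}_N$ is nondecreasing (each factor $1-U_m=\left|1-\phi/m\right|^2B^2r^2$ is nonnegative), hence $\lambda_{k,N}\ge\lambda_{k,k}=1/k>0$ with no further work; only when $U_k>0$ does one run your telescoping argument with $1/(nU_n)$. Without this dichotomy your proof does not cover the full parameter range allowed in the lemma.
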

\begin{proof}
As in the proof of Lemma \ref{lem2},
we may rewrite (\ref{lem-eq1}) in the form
\begin{equation}\label{eq6-lem3}
\sum_{k=1}^{n-1} \left(k^2-|k-\phi|^2B^2r^2 \right)|b_k|^2{r}^{2k}+n^2|b_n|^2{r}^{2n}
\leq B^2|\phi|^2r^2,
\end{equation}
where  $\phi:=1-A/B$. The function $(1-Bz)^{1-A/B}$ clearly shows that the equality, when $n\to \infty$, in $(\ref{eq6-lem3})$
attains with $b_k=c_k.$

Rest of the proof is divided into two steps.

\noindent
{\bf Step-I: \,Cramer's Rule.}\\
We consider the inequalities corresponding to $(\ref{eq6-lem3})$ for $n = 1,\ldots, N$ and
multiply the $n$th coefficient
by a factor $\lambda_{n,N}$. These factors are chosen in such a way that the addition
of the left sides
of the modified inequalities results the left side of (\ref{eq5-lem3}).
For the calculation of the
factors $\lambda_{n,N}$ we get the following system of linear equations
\begin{equation}\label{eq7-lem3}
k=k^2\lambda_{k,N}+\sum_{n=k+1}^{N} {\lambda}_{n,N} \left(k^2-|k-\phi|^2B^2{r}^2 \right),
\, k = 1, \ldots, N.
\end{equation}
Since the matrix of this system is an upper triangular matrix with positive integers
as diagonal elements, the solution of this system is uniquely determined.
Cramer's rule allows us to write the solution of the system (\ref{eq7-lem3}) in the form
$$\lambda_{n,N}=\frac{((n-1)!)^2}{(N! )^2} \,
\mbox{Det$\,A_{n,N}$},
$$
where $A_{n,N}$ is the $(N-n+1)\times(N-n+1)$ matrix constructed as follows:\\
$$A_{n,N}=  \left[ {\begin{array}{cccc}
   n & n^2-|n-\phi|^2 B^2 r^2 & \cdots & n^2-|n-\phi|^2 B^2 r^2\\
   n+1 & (n+1)^2 & \cdots & (n+1)^2 -|n+1-\phi|^2 B^2 r^2\\
   \vdots & \vdots & \vdots & \vdots \\
   N & 0 & \cdots & N^2\\
  \end{array} } \right].
$$
Determinants of these matrices can be found by expanding according to Laplace's rule
with respect to the last row, wherein the first coefficient is $N$ and the last one is $N^2$.
The rest of the entries are zeros. This expansion and a mathematical induction results in
the following formula. If $k\leq N-1$, then
$$\lambda_{k,N}=\lambda_{k,N-1}-\frac{1}{N}\left(1-\left|1-\frac{\phi}{k}\right|^2B^2r^2\right)
    \prod_{m=k+1}^{N-1}\left|1-\frac{\phi}{m}\right|^2B^2 r^2.
$$
Set as an abbreviation $\displaystyle U_k= 1-\left|1-\phi/k\right|^2 B^2r^2$, we get
\begin{equation}\label{eq8-lem3}
\lambda_{k,N}=\lambda_{k,N-1}-\frac{1}{N} U_k\prod_{m=k+1}^{N-1} (1-U_m).
\end{equation}
Note that $U_k$ in (\ref{eq8-lem3}) may be positive as well as negative for all $k\in \mathbb{N}$.
We investigate it by including here a table (see Table~3).
\begin{center}
\begin{tabular}{|c|l|l|l|l|l|}
\hline
 \textbf{$k$} & \textbf{$A$} & \textbf{$B$} &  \textbf{ $r$} & \textbf{$U_k$}  \\
\hline
$1$ & $2+i$ & all & $0.5$ & $-5.25$ \\
\hline
$1$ & $1+i$ & all & $0.4$ & $0.2$ \\
\hline
$2$ & $-2+i$ & $-1$ & $0.5$ & $0.375$  \\
\hline
$2$ & $-2+i$ & $-1$ & $0.8$ & $-0.6$ \\
\hline
\end{tabular}
\end{center}
\hspace{7.5cm} Table~3

{\bf Case (i):} Suppose that $U_k$ is negative.

From the relation (\ref{eq8-lem3}), we see that for fixed $k\in \mathbb{N}, k \leq N-1$,
the sequence $\{\lambda_{k,N}\}$ is strictly non-decreasing, i.e.
$$\lambda_{k,N}-\lambda_{k,N-1}>0
$$
so that
$$ \lambda_{k,N}>\lambda_{k,N-1}>\cdots >\lambda_{k,k}=1/k>0,
$$
and thus $ \lambda_k\ge 0$ when $N\rightarrow \infty$ as required.

{\bf Case (ii):} \, Suppose that $U_k$ is positive.

For fixed $k\in \mathbb{N},~ N\geq k,$ the sequence $\{\lambda_{k,N}\}$ is strictly non-increasing,
i.e. $\lambda_{k,N}-\lambda_{k,N-1}<0$ with
\begin{equation}\label{eq9-lem3}
 \lambda_k:=\lim_{N\rightarrow \infty}\lambda_{k,N}=\frac{1}{k}-U_k\sum_{n=k+1}^{\infty}\frac{1}{n}\prod_{m=k+1}^{n-1} (1-U_m).
\end{equation}
For all $N \in \mathbb{N}, 1 \leq k \leq N$, to prove that $\lambda_{k,N} > 0$,
it is sufficient to prove $\lambda_{k}\geq 0$
for $k \in \mathbb{N}$. This will be completed in Step II.
But before that we want to annotate that
the proof of the said inequality is sufficient for the proof of the
theorem, since, as we noted in the beginning of the proof, equality is received for $b_k =c_k $.

\noindent
{\bf Step-II: \,Positivity of the Multipliers.}\\
Let for an abbreviation
$$S_k=\sum_{n=k+1}^{\infty}\frac{1}{n}\prod_{m=k+1}^{n-1} (1-U_m), \quad k\in \mathbb{N}.
$$
We now prove that
$$S_k\leq \frac{1}{kU_k}.
$$
From the relation (\ref{eq9-lem3}), we get
$$ \lambda_k=\frac{1}{k}-S_k+(1-U_k)S_k.
$$
Again set for an abbreviation
$$ T_k=\frac{1}{k}+(1-U_k)S_k.
$$
It is enough to prove that
\begin{equation}\label{eq10-lem3}
 T_k\leq \frac{1}{kU_k}.
\end{equation}
 To prove (\ref{eq10-lem3}) we use the inequality
 \begin{equation}\label{eq11-lem3}
 \frac{1}{nU_n}>\frac{1}{(n+1)U_{n+1}}
\end{equation}
and the identity
\begin{equation}\label{eq12-lem3}
 \frac{1}{nU_n}=\frac{1}{n}+\frac{1-U_n}{nU_n}
\end{equation}
which are valid for each $n\in \mathbb{N}$.
Repeated application of (\ref{eq11-lem3}) and (\ref{eq12-lem3})
for $n=k,k+1,\ldots, P$ results the inequality
$$\frac{1}{kU_k}> \sum_{n=k}^{P}\frac{1}{n}\prod_{m=k}^{n-1}(1-U_m)
   +\frac{\prod_{m=k}^{P}(1-U_m)}{PU_P}=: S_{k,P}+R_{k,P},\quad ~\mbox{for $k\leq P$}.
$$
Since $R_{k,P}>0$, taking the limit as $P\rightarrow \infty$ we obtain
$$\frac{1}{kU_k}\geq \lim_{P\rightarrow \infty}S_{k,P}= \sum_{n=k}^{\infty}\frac{1}{n}
\prod_{m=k}^{n-1}(1-U_m),
$$
and we complete the inequality~(\ref{eq10-lem3}).
This completes the proof of Lemma~\ref{lem3}.
\end{proof}

\section{Proofs of the main results} \label{sec3}
\subsection*{Proof of Theorem~\ref{thm1}}
Let $f \in \es ^*(A,0)$. By the definition of the class $\es ^*(A,0)$, it suffices to assume that
$0<A\leq 1$ and
$$ \frac{zf'(z)}{f(z)} \prec 1-Az =\frac{zk'_{A,0}(z)}{k_{A,0}(z)}, ~z\in \D.
$$
By the subordination principle, we obtain that $z/f(z) \prec e^{-Az}$ which in terms of the Taylor coefficients may be
written as
$$ 1+ \sum_{n=1}^{\infty}b_nz^n \prec e^{-Az}= 1+ \sum_{n=1}^{\infty}c_nz^n ,\quad  c_n= (-1)^n \frac{A^n}{n! }.
$$
 %
By Lemma~\ref{lem2}, we have
$$ \sum_{n=1}^N n|b_n|^2 {r}^{2n} \leq \sum_{n=1}^{N}n|c_n|^2 {r}^{2n},\, N\in \mathbb{N},\, r\in (0,1),
$$
which implies that
$$
\Delta \left(r,\frac{z}{f}\right) = \pi \sum_{n=1}^{\infty}n|b_n|^2 {r}^{2n} \leq \Delta \left(r,\frac{z}{k_{A,0}}\right)
=\pi \sum_{n=1}^{\infty}n|c_n|^2 {r}^{2n}.
$$
We claim that
$$\pi \sum_{n=1}^{\infty}n|c_n|^2 {r}^{2n} =E_{A,0}(r),
$$
where $E_{A,0}(r) =  \pi {A}^2{r}^2 {}_0F_1(2;{A}^2{r}^2)$  with $0<A\leq 1$. To prove the claim, we observe that
\begin{align*}
\pi^{-1}\Delta\left(r,\frac{z}{k_{A,0}}\right) & =
\sum_{n=1}^{\infty}n\frac{A^{2n}}{(n!)^2 } {r}^{2n}  \\
& = {A}^2{r}^2\sum_{n=0}^{\infty}
\frac{1}{(2)_n(1)_n}{A}^{2n}{r}^{2n} \\
& =  A^2{r}^2 {}_0F_1(2;{A}^2{r}^2)\\
 & := {\pi}^{-1}E_{A,0}(r)
\end{align*}
and thus,
$$\Delta \left(r,\frac{z}{f}\right) \leq \Delta \left(r,\frac{z}{k_{A,0}}\right) =E_{A,0}(r).
$$
The equality case is obvious from $z/k_{A,0}(z) =e^{-Az}$. The proof of the theorem is complete.
\hfill$\Box$

\subsection*{Proof of Theorem~\ref{thm2}}
Suppose $f \in \es ^*(A,B)$,  $-1\leq B<0$ and $A \neq B$. Then by setting $g(z):={z}/{f(z)}$, we write
$$\frac{zg'(z)}{g(z)}=1-\frac{zf'(z)}{f(z)}\prec 1-\frac{1+Az}{1+B z} =\frac{(B-A)z}{1+B z}, \quad z\in \D.
$$
By a well-known subordination result, we get
$$ g(z)=\frac{z}{f(z)} \prec (1+Bz)^{1-\frac{A}{B}}= \frac{z}{k_{A,B}(z)},
$$
where $k_{A,B}$ is defined by \eqref{eq2-ext}.  If
$$ \frac{z}{f(z)}= 1+ \sum_{n=1}^{\infty}b_nz^n ~\mbox{and}~\ \frac{z}{k_{A,B}(z)}
 =1+ \sum_{n=1}^{\infty}c_nz^n,\quad |z|<r,
$$
then Lemma~\ref{lem3} gives that
$$ \sum_{n=1}^N n|b_n|^2 {r}^{2n} \leq \sum_{n=1}^{N}n|c_n|^2 {r}^{2n}
$$
for each $ N\in \mathbb{N}$ and $r\in (0,1]$. Allowing $N\rightarrow \infty$, we obtain
$$ \Delta \left(r,\frac{z}{f}\right) = \sum_{n=1}^{\infty}n|b_n|^2 {r}^{2n} \leq \Delta \left(r,\frac{z}{k_{A,B}}\right)
= \sum_{n=1}^{\infty}n|c_n|^2 {r}^{2n}.
$$
Clearly,
$$\displaystyle c_n= (-1)^n\frac{(\zeta)_n}{(1)_n} {B}^n ~\mbox{  with }~\zeta =(A/B)-1.
$$
Now, applying the area formula \eqref{ex-eq1} for the function $\displaystyle z/k_{A,B}(z)$, we obtain that
\begin{align*}
\pi^{-1}\Delta\left(r,\frac{z}{k_{A,B}}\right) & =
\sum_{n=1}^{\infty}n|c_n|^2 {r}^{2n}, \quad |z|<r \\
 & = \sum_{n=1}^{\infty}n \frac{(\zeta)_n(\overline{\zeta})_n}{(1)_n(1)_n}{B}^{2n}{r}^{2n} \\
& = |\zeta|^2{B}^2{r}^2\sum_{n=0}^{\infty}
\frac{(\zeta+1 )_n(\overline{\zeta}+1)_n}{(2)_n(1)_n}{B}^{2n}{r}^{2n} \\
& =  |\overline{A}-B|^2{r}^2 {}_2F_1(A/B,\overline{A}/B;2;{B}^2{r}^2)\\
& := {\pi}^{-1}E_{A,B}(r),
\end{align*}
and the proof of Theorem~\ref{thm2} is complete.
\hfill$\Box$

\section{Concluding Remarks}
It would be interesting to solve the analog of Yamashita's extremal problem \eqref{eq3-ext} for
many interesting geometric subclasses of functions from $\es$. For example, determine the
analog of Theorems~\ref{thm1} and \ref{thm2} when $zf'$ belongs to the class  $\es ^*(A,B)$
and also for functions $f$ in the Bazilevi${\rm \check{c}}$ class or for functions convex 
in some direction.

\bigskip
\noindent
{\bf Acknowledgements.} This research was started in the mid 2013 when the
second and third author were visiting ISI Chennai Centre.
The third author acknowledges the support of National Board for Higher Mathematics,
Department of Atomic Energy, India (grant no. 2/39(20)/2010-R$\&$D-II).


\begin{thebibliography}{99}

\bibitem{Clu59} {J. G. Cluine},
On meromorphic schlicht functions,
{\em J. London Math. Soc.},
\textbf{34}(1959), 215--216.

\bibitem{CK60} { J. G. Clunie and F. R. Keogh},
On starlike and convex schlicht functions,
{\em J. London Math. Soc.},
\textbf{35}(1960), 229--233.

\bibitem{Dur83} { P. L. Duren},
{\em Univalent Functions}, Springer-Verlag, 1983.


\bibitem{Good83} { A. W. Goodman},
{\em Univalent Functions}, Vol. 1-2, Mariner, Tampa, Florida, 1983.

\bibitem{Jan73} { W. Janowski},
Some extremal problems for certain families of analytic functions,
{\em Ann. Polon. Math.},
\textbf{28}(1981), 297--326.

\bibitem{Lib67} { R. J. Libera},
Univalent $\alpha$-spiral functions,
{\em Canad. J. Math.},
\textbf{19}(1967), 449--456.



\bibitem{Nev21} { R. Nevalinna},
$\ddot{U}$ber die konforme Abbildung von Sterngebieten,
{\em $\ddot{O}$resikt av Finska Vetenskaps Soc. $\ddot{F}$orh.},
\textbf{63A}, no. 6(1921), 1--21.


\bibitem{OPW13} { M. Obradovi\'{c}, S. Ponnusamy, and K.-J. Wirths},
A proof of Yamashita's conjecture on area integral,
{\em Comput. Methods Funct. Theory}, \textbf{13}(2013), 479--492.

\bibitem{OPW14} { M. Obradovi\'{c}, S. Ponnusamy, and K.-J. Wirths},
Integral means and Dirichlet integral for analytic functions,
Mathematische Nachrichtten (2014), 9 pages, To appear.

\bibitem{Pad68} { K. S. Padmanabhan},
On certain classes of starlike functions in the unit disk,
{\em J. Indian Math. Soc.}, (N.S.)
\textbf{32}(1968), 89--103.


\bibitem{PW13} { S. Ponnusamy, and K.-J. Wirths},
On the problem of Gromova and Vasil'ev on integral means, and Yamashita's
conjecture for spirallike functions,
{\em Ann. Acad. Sci. Fenn. Ser. AI Math.}  \textbf{39}(2014), 721--731.

\bibitem{Rai60} { E. D. Rainville},
{\em Special Functions}, The Macmillan Company, New York, 1960.

\bibitem{Rob36} { M. S. Robertson},
On the theory of univalent functions,
{\em Annals of Mathematics},
\textbf{37}(1936), 374--408.

\bibitem{Rob70} { M. S. Robertson},
Quasi-subordination and coefficient conjectures,
{\em J. Bull. Amer. Math. Soc.},
\textbf{76}(1970), 1--9.

\bibitem{Rog43} { W. Rogosinski},
On the coefficients of subordinate functions,
{\em Proc. London Math. Soc.},
\textbf{48}(2)(1943), 48--82.

\bibitem{Sil78} { H. Silverman},
Subclass of starlike functions,
{\em Rev. Roum. Math. Pure Appl.},
\textbf{33}(1978), 1093--1099.

\bibitem{Sin68} { R. Singh},
On a class of starlike functions,
{\em J. Indian Math. Soc.},
\textbf{32}(1968), 208--213.

\bibitem{SiSi74} { R. Singh and V. Singh},
On a class of bounded starlike functions,
{\em Indian J. Pure Appl. Math.},
\textbf{5}(1974), 733--754.

\bibitem{SS14} { S. K. Sahoo and N. L. Sharma},
On area integral problem for analytic functions in the starlike family, preprint
(see the preprint available at {\em arXiv:1405.0469 [math.CV]}).

\bibitem{Spacek-33} { L. ${\rm \check{S}pa\check{c}ek}$},
Contribution $\rm \grave{a}$ la th$\rm \acute{e}$orie des fonctions univalentes
(in Czech),
{\em $\rm \check{C}$asop P$\rm \check{e}$st. Mat.-Fys.,} {\bf 62}(1933), 12--19.


\bibitem{Yam90} { S. Yamashita},
Area and length maxima for univalent functions,
{\em Proc. London Math. Soc.},
\textbf{41}(2)(1990), 435--439.
\end{thebibliography}
\end{document}